\newtheorem{thm}{Theorem}[section]
\newtheorem{theorem}[thm]{Theorem}
\newtheorem{proposition}[thm]{Proposition}
\newtheorem{lemma}[thm]{Lemma}
\newtheorem*{theorem*}{Theorem}
\theoremstyle{definition}
\newtheorem{example}[thm]{Example}
\newtheorem*{ack}{Acknowledgement}
\newtheorem{remark}[thm]{Remark}
\newcommand{\N}{\mathbb{N}} 
\newcommand{\Z}{\mathbb{Z}} 
\newcommand{\R}{\mathbb{R}}
\newcommand{\G}{\Gamma}
\newcommand{\g}{\gamma}
\DeclareMathOperator{\Ima}{Im}
\title{Almost finiteness and homology of certain non-free actions}
\author{Eduard Ortega and Eduardo Scarparo}
\address{Department of Mathematical Sciences, NTNU, NO-7491 Trondheim, Norway}
\email{eduard.ortega@ntnu.no, eduardo.scarparo@ntnu.no}
\thanks{This work was carried out during the tenure of an ERCIM ‘Alain Bensoussan’ Fellowship Programme.}
\begin{document}

\begin{abstract}
We show that Cantor minimal $\Z\rtimes\Z_2$-systems and essentially free amenable odometers are almost finite. We also compute the homology groups of Cantor minimal $\Z\rtimes\Z_2$-systems and show that the associated transformation groupoids satisfy the HK conjecture if and only if the action is free.

\end{abstract}

\maketitle

\section{Introduction}

The property of almost finiteness for ample groupoids was introduced by Matui and applied to questions in groupoid homology  in \cite{MR2876963}. Furthermore, in \cite{ara2020strict}, \cite{kerr2017dimension}, \cite{zbMATH07172069} and \cite{suzuki2017almost}, this property was applied to problems in classification of $C^*$-algebras.

Kerr and Szabó in \cite{zbMATH07172069} and Suzuki in \cite{suzuki2017almost} observed that it is a consequence of work of Downarowicz and Zhang \cite{downarowicz2019symbolic} that any free action of a group with subexponential growth on the Cantor set is almost finite. Moreover, free odometers arising from sequences of finite index normal subgroups of an amenable group were shown to be almost finite by Kerr in \cite{kerr2017dimension}.

On the other hand, there exist interesting examples of non-free odometers. For example, in \cite{scarparo2019homology}, certain non-free $\Z\rtimes\Z_2$-odometers were shown to be counterexamples to the \emph{HK conjecture}, which is a conjecture posted by Matui in \cite{MR3552533} that relates the homology groups of an ample groupoid with the K-theory of its reduced $\mathrm{C}^*$-algebra.

In Section \ref{alf}, we investigate almost finiteness of odometers and of Cantor minimal $\Z\rtimes\Z_2$-systems. 

In particular, we prove that an amenable odometer is essentially free if and only if it is almost finite. The proof uses a result by Kar \emph{et al} (\cite{zbMATH06782228}) which shows that, given an amenable essentially free odometers, the acting group admits a Følner sequence consisting of coset representatives.

We also show that any Cantor minimal $\Z\rtimes\Z_2$-system is almost finite. For that, we use a structure result for Cantor minimal $\Z\rtimes\Z_2$-systems from \cite{MR1245825}.

In Section \ref{hom}, we compute the homology groups of Cantor minimal $\Z\rtimes\Z_2$-systems and conclude that these systems satisfy the HK conjecture if and only if the action is free. It should come as no surprise that, in determining the homology groups, we are able to mainly follow the ideas introduced by Bratteli \emph{et al} in \cite{MR1245825} and Thomsen in \cite{MR2586354} in their computation of the K-theory of the associated crossed products since, although the HK conjecture is now known to be false, it has been verified in many cases (see the work of Proietti and Yamashita \cite[Remark 4.7]{proietti2020homology} and the references therein).

\section{Almost finiteness}\label{alf}

In this section, we review some terminology about étale groupoids and verify almost finiteness for certain classes of non-principal groupoids. The reader can find an introduction to étale groupoids in e.g. \cite[Chapter 3]{putnam}.

\subsection{Almost finite groupoids} 

Let $G$ be a Hausdorff étale groupoid with range and source maps denoted by $r$ and $s$. A \emph{bisection} is a subset $S\subset G$ such that $r|_S$ and $s|_S$ are injective maps. 

Let $G':=\{g\in G:r(g)=s(g)\}$ and $G^{(0)}$ be the unit space of $G$. Then $G$ is said to be \emph{principal} if $G'=G^{(0)}$ and \emph{effective} if the interior of $G'$ equals $G^{(0)}$. The \emph{orbit} of a point $x\in G^{(0)}$ is the set $G(x):=r(s^{-1}(x))$. We say that $G$ \emph{minimal} if the orbit of each $x\in G^{(0)}$ is dense in $G^{(0)}$. Also $G$ is said to be  \emph{ample} if its unit space is totally disconnected.

\begin{example}
Let $X$ be a locally compact Hausdorff space and let $\alpha:\Gamma \curvearrowright X$ be an action of a discrete  group $\Gamma$ on $X$. Given $g\in \Gamma$ and $x\in X$ we will denote by $g x:=\alpha(g)(x)$.
As a space, the \emph{transformation groupoid} $G$ associated with $\alpha$ is $G:=\Gamma\times X$ equipped with the product topology.
The product of two elements $(h,y), (g,x)\in G$ is defined if and only if $y=gx$, in which case $(h,gx)(g,x):=(hg,x)$.
Inversion is given by $(g,x)^{-1}:=(g^{-1},gx)$. The unit space $G^{(0)}$ is naturally identified with $X$. Note that $G$ is principal if and only if $\alpha$ is free ($gx=x\Rightarrow g=e$). If $X$ is totally disconnected, then $G$ is ample.
\end{example}

Let $G$ be an ample groupoid with compact unit space. A subgroupoid $K\subset G$ is said to be \emph{elementary} if $K$ is compact-open, principal and $K^{(0)}=G^{(0)}$. The groupoid $G$ is said to be \emph{almost finite} if for any compact subset $C\subset G$ and $\epsilon>0$, there is $K\subset G$ elementary subgroupoid such that, for any $x\in G^{(0)}$, we have that

$$\frac{|CKx\setminus Kx|}{|K(x)|}<\epsilon.$$

For compact groupoids, the following holds:

\begin{proposition}
Let $G$ be an ample almost finite groupoid. If $G$ is compact, then $G$ is principal. 
\end{proposition}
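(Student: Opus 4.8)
The plan is to use almost finiteness with a carefully chosen compact set $C$ and a small $\epsilon$ to force an elementary subgroupoid $K$ to be all of $G$, and then exploit that $K$ is principal. Since $G$ is ample with compact unit space $G^{(0)}$, and $G$ itself is compact, the natural choice is to take $C = G$. Apply the definition of almost finiteness to this $C$ and to some $\epsilon < 1$ (say $\epsilon = 1/2$): there is an elementary subgroupoid $K \subset G$ — so $K$ is compact-open, principal, and $K^{(0)} = G^{(0)}$ — with $|CKx \setminus Kx| / |K(x)| < \epsilon$ for every $x \in G^{(0)}$. Here one should note that since $K$ is principal with full unit space, the orbit $K(x) = r(s^{-1}(x) \cap K)$ is finite for every $x$ (compactness of $K$ plus the discreteness of fibers in an étale groupoid), and $Kx = s^{-1}(x) \cap K$ is a finite set of the same cardinality $|K(x)|$.

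The key step is to observe that because $C = G \supseteq K$, we have $CKx \supseteq GKx = s^{-1}(x) = Gx$ (every arrow with source $x$ lies in $G$, and composing with the unit $x \in K$ recovers it). Hence $CKx \setminus Kx = Gx \setminus Kx$, and the inequality becomes $|Gx \setminus Kx| < \epsilon |K(x)| < |K(x)| = |Kx|$. But in fact $Gx$ is finite (it is contained in the compact set $G$ with discrete source-fiber), and $|Gx \setminus Kx| = |Gx| - |Kx|$ since $Kx \subseteq Gx$. So $|Gx| - |Kx| < \epsilon |Kx|$; what we really want is that this number is $0$, forcing $Gx = Kx$ for all $x$, hence $G = K$ and $G$ principal. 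To get $|Gx \setminus Kx| = 0$ from a strict bound by $\epsilon|K(x)|$, it suffices that the quantity $|Gx\setminus Kx|/|K(x)|$, if nonzero, is bounded below by something — and indeed we can instead simply run the argument with $\epsilon$ chosen after the fact, or better, use that there is a uniform bound: since $G$ is compact, $\sup_x |Gx| =: N < \infty$, so $|Gx \setminus Kx|/|K(x)| \geq 1/N$ whenever it is nonzero. Choosing $\epsilon < 1/N$ from the start then forces $|Gx \setminus Kx| = 0$ for all $x$, i.e.\ $Gx = Kx$, whence $G = \bigcup_x Gx = \bigcup_x Kx = K$. Since $K$ is principal, so is $G$.

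The main obstacle I anticipate is justifying the finiteness and uniform-boundedness claims cleanly: that $Gx$ is finite for each $x$ and that $N := \sup_{x} |Gx|$ is finite. This follows because $G$ is a compact étale (hence $r,s$ local homeomorphisms) groupoid, so $G$ is covered by finitely many open bisections $S_1, \dots, S_m$; then $Gx = s^{-1}(x)$ meets each $S_i$ in at most one point, so $|Gx| \leq m$ for all $x$, giving both finiteness and the uniform bound $N \leq m$. One should also double-check the set-theoretic identity $CKx = Gx$ when $C = G$: the product $CKx$ consists of all composable products $c k$ with $c \in C$, $k \in Kx$, and since $K^{(0)} = G^{(0)}$ we have $x \in Kx$, so for any $g \in s^{-1}(x) = Gx$, the product $g \cdot x = g$ lies in $GKx = CKx$; conversely $CKx \subseteq Gx$ trivially since source is preserved. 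With these routine points in place the argument is complete.
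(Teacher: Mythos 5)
Your proof is correct and rests on the same two ingredients as the paper's: the finite cover of the compact groupoid $G$ by open bisections gives a uniform bound $N$ on $|G(x)|$, and choosing $\epsilon<1/N$ forces the defect $|CKx\setminus Kx|$ to vanish. The paper argues by contraposition with $C=\{g\}$ for a single non-unit isotropy element $g$, while you take $C=G$ and conclude the slightly stronger statement $K=G$, but this is essentially the same argument.
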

\begin{proof}
Compactness of $G$ implies that there is a finite partition of $G$ into compact-open bisections. Hence, there is $M>0$ such that, for each $x\in G^{(0)}$, we have that $|G(x)|<M$.

If $G$ is not principal, there is $g\in G\setminus G^{(0)}$ such that $r(g)=s(g)$. Then, for any elementary subgroupoid $K\subset G$, we have that $g\notin K$ and

$$\frac{|gKs(g)\setminus Ks(g)|}{|K(s(g))|}>\frac{1}{M}.$$

Therefore, $G$ is not almost finite. 

\end{proof}

Given an étale groupoid $G$ with compact unit space, denote by $M(G)$ the set of $G$-invariant probability measures on $G^{(0)}$. 

\begin{remark} \label{ess}
 It was observed in \cite[Remark 6.6]{MR2876963} that if $G$ is an almost finite groupoid, then $M(G)\neq\emptyset$ and, given $\mu\in M(G)$, we have that 

\begin{equation}\label{essfree}
\mu(r(U\cap(G'\setminus G^{(0)})))=0,\text{ for any $U\subset G$ compact-open bisection.}
\end{equation}

\end{remark}

\subsection{Almost finite actions}
Let us recall the characterization of almost finiteness for transformation groupoids presented in \cite{zbMATH07172069} and \cite{suzuki2017almost}. We will restrict ourselves to actions on totally disconnected spaces.

Let $\G$ be a group acting on a compact Hausdorff totally disconnected space $X$. A \emph{clopen tower} is a pair $(V, S)$ consisting of a clopen subset $V$ of $X$ and a finite subset $S$ of $\G$ such that the sets $sV$ for $s\in S$ are pairwise disjoint. The set $S$ is said to be the \emph{shape} of the tower. A \emph{clopen castle} is a finite collection of clopen towers $\{(V_i, S_i)\}_{i\in I}$ such that the sets $S_iV_i=\{gV_i:g\in S_i\}$ for $i\in I$ are pairwise disjoint.

Given $\epsilon>0$ and $K\subset \G$ finite, we say that a finite set $F\subset\G$ is \emph{$(K,\epsilon)$-invariant} if $|KF\triangle F|<\epsilon|F|$.

Let $G$ be the transformation groupoid associated to the action of $\G$ on $X$. Then $G$ is almost finite if and only if, given $K\subset \G$ finite and $\epsilon>0$, there is a clopen castle which partitions $X$, and whose shapes are $(K,\epsilon)$-invariant (see \cite[Lemma 5.2]{suzuki2017almost} for a proof of this fact). In this case, we say that the \emph{action is almost finite}.

Recall that an action of a group $\G$ on a locally compact Hausdorff space $X$ is said to be \emph{minimal} if the orbit of any $x\in X$ is dense in $X$. Clearly, this is equivalent to the associated transformation groupoid being minimal. It is also equivalent to every open (or closed) $\G$-invariant set being trivial. If the action is minimal, then any $f\in C(X,\Z)$ which is $\G$-invariant must be constant. We will say that a homeomorphism $\varphi$ on $X$ is minimal if the $\Z$-action induced by $\varphi$ is minimal. By a \emph{Cantor minimal $\G$-system}, we mean a minimal action of $\G$ on the Cantor set.

Given $\G$ a group acting on a compact Hausdorff space $X$, we denote by $M_\G(X)$ the set of $\G$-invariant probability measures on $X$. For $g\in\G$, let $\mathrm{Fix}_g\subset X$ be the set of points fixed by $g$.

If $\mu\in M_\G(X)$, we say that the action of $\G$ on $(X,\mu)$ is \emph{essentially free} if, for any $g\in\G\setminus\{e\}$, we have $\mu(\mathrm{Fix}_g)=0$. The action is said to be \emph{topologically free} if the interior of $\mathrm{Fix}_g$ is empty for each $g\in\G\setminus\{e\}$. Notice that topological freeness is equivalent to effectiveness of the associated transformation groupoid. 

Also observe that if $\G\curvearrowright X$ is a minimal action, then any $\mu\in M_\G(X)$ has full support. Therefore, for minimal actions, essential freeness of $\G\curvearrowright(X,\mu)$ implies topological freeness of $\G\curvearrowright X$.

\begin{remark}\label{afifree}
If $G$ is a transformation groupoid associated to an action of a group $\G$ on a compact Hausdorff space $X$ and $\mu\in M(G)\approx M_\G(X)$, then the condition in \eqref{essfree} is equivalent to the action of $\G$ on $(X,\mu)$ being essentially free. It follows from Remark \ref{ess} that if the action is almost finite, then $\G\curvearrowright (X,\mu)$ is essentially free.
\end{remark}

\subsection{Odometers}

Let $\G$ be a group and $(\Gamma_i)_{i\in\N}$ a sequence of finite index subgroups of $\Gamma$ such that, for every $i\in \N$, $\Gamma_i \gneq \Gamma_{i+1}$. 

For each $i\in\mathbb{N}$, let $p_i\colon \G /\Gamma_{i+1}\to\Gamma/\Gamma_{i}$ be the surjection given by 
\begin{equation}
p_i(\gamma\Gamma_{i+1}):=\gamma\Gamma_i,\text{ for } \gamma\in\Gamma.\label{proje}
\end{equation} 
Let $X:=\varprojlim(\G/\G_i,p_i)=\{(x_i)\in\prod\Gamma/\Gamma_i:p_i(x_{i+1})=x_i,\forall i\in\mathbb{N}\}$. Then $X$ is homeomorphic to the Cantor set and $\Gamma$ acts in a minimal way on $X$ by $\gamma(x_i):=(\gamma x_i)$, for $\g\in\G$ and $(x_i)\in X$. This action is called an \emph{odometer}.

Given $j\geq 1$ and $g\G_j\in \G/\G_j$, let $U(j,g\G_j):=\{(x_i)\in X:x_j=g\G_j\}$. Then $\{U(j,g\G_j):j\in\mathbb{N},g\G_j\in\G/\G_j\}$ is a basis for $X$ consisting of compact-open sets.

Notice that $X$ admits a unique $\G$-invariant probability measure. Hence, there is no ambiguity in calling an odometer action essentially free. It is known (see, for example, \cite[(1.6)]{MR2783933}) that the odometer is essentially free if and only if, for each $g\in\G\setminus\{e\}$, we have
$$\lim_i\frac{|\{x\in\G/\G_i:gx=x\}|}{|\G/\G_i|}=0.$$

If the odometer is free, then $\bigcap_i\G_i=\{e\}$, but the converse does not hold in general (see e.g. \cite[Theorem 1]{abert2007non}).

Let us now give a characterization of almost finite odometers. First recall that a Følner sequence $(F_n)$ for $\G$ is a sequence of finite subsets of $\G$ such that for every $g\in\G$ we have that $\frac{|gF_n\Delta F_n|}{|F_n|}\to 0$.  

\begin{theorem}[\cite{zbMATH06782228},\cite{zbMATH00006493}]\label{main}

Let $\G$ be a countable amenable group and $\G\curvearrowright X:=\varprojlim\G/\G_i$ an  odometer. The following conditions are equivalent:
\begin{enumerate}
\item[(i)] The action is essentially free;
\item[(ii)] There is a Følner sequence $(F_n)$ for $\G$ such that each $F_n$ is a complete set of representatives for $\G/\G_n$;
\item[(iii)] The action is almost finite;
\item[(iv)] There is a unique tracial state on $C(X)\rtimes\G$.
\end{enumerate}
 \end{theorem}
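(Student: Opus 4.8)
The plan is to prove the cycle of implications $(i)\Rightarrow(ii)\Rightarrow(iii)\Rightarrow(iv)\Rightarrow(i)$, invoking the two cited sources for the genuinely external inputs and supplying short arguments for the rest. The implication $(i)\Rightarrow(ii)$ is precisely the content of the result of Kar, Niemec, et al.\ in \cite{zbMATH06782228}: for an essentially free amenable odometer $\G\curvearrowright\varprojlim\G/\G_i$, one can choose, for each $n$, a complete set $F_n$ of coset representatives for $\G/\G_n$ which is simultaneously a F\o lner set; so I would simply cite that theorem, after checking that their hypotheses match the present setup (countable amenable $\G$, descending chain $\G_i$ of finite-index subgroups). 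No original work is needed here.

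For $(ii)\Rightarrow(iii)$, I would build the required clopen castle directly from the F\o lner coset-representative sets. Fix $K\subset\G$ finite and $\epsilon>0$; choose $n$ large enough that $F_n$ is $(K,\epsilon)$-invariant. The castle has a single tower: take the base $V:=U(n,\G_n)=\{(x_i)\in X: x_n=e\G_n\}$ and shape $S:=F_n$. Since $F_n$ is a set of distinct coset representatives, the sets $gV$ for $g\in F_n$ are pairwise disjoint clopen sets, and because $F_n$ is \emph{complete}, they cover $X$: indeed every $(x_i)\in X$ has $x_n=g\G_n$ for a unique $g\in F_n$, so $g^{-1}(x_i)\in V$. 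Thus $(V,F_n)$ is a clopen tower whose translates partition $X$ and whose shape is $(K,\epsilon)$-invariant, which by \cite[Lemma 5.2]{suzuki2017almost} (recalled in the excerpt) is exactly almost finiteness of the action. This is the step where the concrete structure of odometers does the work, and it is routine once the right tower is written down.

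The implication $(iii)\Rightarrow(iv)$ I would obtain by combining two facts. First, since $X$ is an odometer it carries a \emph{unique} $\G$-invariant probability measure $\mu$, and for the minimal amenable action $\G\curvearrowright X$ the tracial states on $C(X)\rtimes\G$ are in bijection with pairs $(\mu,\text{something})$; more concretely, by Remark~\ref{afifree}, almost finiteness forces $\G\curvearrowright(X,\mu)$ to be essentially free, and an essentially free minimal action with a unique invariant measure has a unique trace on the crossed product (the canonical conditional expectation $E:C(X)\rtimes\G\to C(X)$ is the unique trace-preserving projection, because essential freeness makes the off-diagonal terms vanish under any trace). Alternatively, and perhaps more cleanly, note that $(iii)\Rightarrow(i)$ already holds by Remark~\ref{afifree}, so it suffices to prove $(i)\Rightarrow(iv)$ and $(iv)\Rightarrow(i)$; the equivalence of essential freeness with unique trace on the crossed product for odometers is the content of \cite{zbMATH00006493} (this is where that second reference enters), so both remaining implications can be cited. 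I would present it this way: $(iii)\Rightarrow(i)$ by Remark~\ref{afifree}, and then $(i)\Leftrightarrow(iv)$ by \cite{zbMATH00006493}.

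The main obstacle is bookkeeping rather than mathematics: one must make sure the definition of "complete set of representatives" in \cite{zbMATH06782228} is being used in the direction needed (so that the translates $gV$ genuinely \emph{cover} $X$, not merely are disjoint), and one must confirm that the cited trace-uniqueness result of \cite{zbMATH00006493} is stated for the full generality of amenable odometers (and that for non-amenable $\G$ one would instead be comparing with the reduced crossed product, which is why the amenability hypothesis is kept throughout). Everything else — the tower construction, the appeal to Remark~\ref{afifree}, the invocation of \cite[Lemma 5.2]{suzuki2017almost} — is short and mechanical.
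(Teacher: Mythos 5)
Your proposal is correct and follows essentially the same route as the paper: (i)$\Leftrightarrow$(iv) by citation to \cite{zbMATH00006493}, (iii)$\Rightarrow$(i) via Remark \ref{afifree}, (i)$\Rightarrow$(ii) by \cite[Theorem 7]{zbMATH06782228}, and (ii)$\Rightarrow$(iii) via the single-tower castle $X=\bigsqcup_{\g\in F_n}\g U(n,\G_n)$ with $(K,\epsilon)$-invariant shape $F_n$. The paper's proof is exactly this, stated more tersely.
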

\begin{proof}
 The equivalence of (i) and (iv) is a consequence of \cite[Corollary 2.8]{zbMATH00006493}. That (iii) implies (i) is a consequence of Remark \ref{afifree}. The implication from (i) to (ii) is the content of \cite[Theorem 7]{zbMATH06782228}. Let us show then that (ii) implies (iii).

Notice that for each $n\in\N$,

\begin{equation}\label{castelo}
X=\bigsqcup_{\g\in F_n}\g U(n,\G_n).
\end{equation}

By taking $n$ sufficiently big, we can make the shapes of the castle \eqref{castelo} arbitrarily invariant.
\end{proof}

A few remarks are in order about the result above:

\begin{remark}
(i) It follows from Theorem \ref{main} and the results in \cite{ara2020strict} that crossed products associated to essentially free odometers in which the acting group is countable and amenable are classifiable by their Elliott-invariant.

(ii) It is a consequence of \cite[Proposition 4.7]{MR3694599} that if $\G\curvearrowright\varprojlim\G/\G_i$ is an almost finite odometer, then $\G$ is amenable.

(iii) In \cite[Proposition 2.1]{scarparo2019homology}, it was shown that an odometer $\G\curvearrowright X:=\varprojlim\G/\G_i$ is topologically free if and only if, for every $\gamma\in\bigcap\Gamma_i\setminus\{e\}$ and $j\geq 1$, there exists $b\in \Gamma_j$ such that $b^{-1}\gamma b\notin\bigcap\Gamma_i$. 

We do not know whether, for a countable amenable group $\G$, topological freeness of a $\G$-odometer implies essential freeness. If one drops the amenability assumption, there are several counterexamples in the literature (see, for example, \cite[Theorem 1]{abert2007non}).
\end{remark}

\begin{example}\label{ce}

Recall that the \emph{infinite dihedral group} is the semidirect product $\Z\rtimes\Z_2$ associated to the action of $\Z_2$ on $\Z$ by multiplication by $-1$. 

Let $(n_i)$ be a strictly increasing sequence of natural numbers such that $n_i|n_{i+1}$, for every $i\in\mathbb{N}$. Define $\Gamma:=\mathbb{Z}\rtimes\mathbb{Z}_2$ and, for $i\geq 1$, $\Gamma_i:=n_i\mathbb{Z}\rtimes\mathbb{Z}_2$.

By \cite[Lemma 3.2]{scarparo2019homology} and the fact that any element of the form $(n,1)\in\Z\rtimes\Z_2$ is conjugate to either $(0,1)$ or $(1,1)$, we obtain that any $g\in\G\setminus\{e\}$ fixes at most finitely many points in $\varprojlim\G/\G_i$. Hence, this odometer is essentially free.  

Let us describe a Følner sequence for $\Z\rtimes\Z_2$ satisfying condition (ii) in Theorem \ref{main}. Given $m\in\N$, let $F_m\subset\Z\rtimes\Z_2$ be defined by

\begin{align}\label{foelner}
F_m=\begin{cases}
([-\frac{m}{2},-1]\times\{1\})\cup([0,\frac{m}{2})\times\{0\}), & \text{if $m$ is even}\\
([-\frac{m-1}{2},-1]\times\{1\})\cup([0,\frac{m-1}{2}]\times\{0\}), & \text{if $m$ is odd.}\end{cases}
\end{align}
Then $(F_m)_{m\in\N}$ is a Følner sequence for $\Z\rtimes\Z_2$ such that each $F_m$ is a set of representatives for $\frac{\Z\rtimes\Z_2}{(m\Z)\rtimes\Z_2}$.

\end{example}

\subsection{Almost finiteness of Cantor minimal $\Z\rtimes\Z_2$-systems}

Notice that any action $\alpha$ of $\Z\rtimes\Z_2$ on a set $X$ is given by a pair of bijections on $X$ $(\varphi,\sigma)$ such that $\sigma^2=\mathrm{Id}_X$ and $\sigma\varphi\sigma=\varphi^{-1}$, so that $\alpha_{n,i}=\varphi^n\sigma^i$, for $(n,i)\in\Z\rtimes\Z_2$.

There is an isomorphism $\Z\rtimes\Z_2\simeq\Z_2*\Z_2$ which takes the canonical generators $a,b\in\Z_2*\Z_2$ to $(1,1),(0,1)\in\Z\rtimes\Z_2$.

\begin{proposition}\label{fixed}
Let $\alpha:=(\varphi,\sigma)$ be a minimal action of $\Z\rtimes\Z_2$ on the Cantor set $X$. The following holds:

(i) The $\Z$-action induced by $\varphi$ is free and $\alpha$ is topologically free. If $\alpha$ is not free, then either $\sigma$ or $\varphi\sigma$ has at least one fixed point.

(ii) If the $\Z$-action induced by $\varphi$ is not minimal, then there exists a clopen set $Y$ such that $Y\cap\sigma(Y)=\emptyset$, $Y\cup\sigma(Y)=X$, $\varphi(Y)=Y$ and $\varphi|_Y$ is minimal. In particular, $\alpha$ is free.

\end{proposition}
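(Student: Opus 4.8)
The plan is to leverage minimality of $\alpha$ repeatedly: any set built out of fixed-point sets or $\varphi$-minimal sets turns out to be invariant under both $\varphi$ and $\sigma$, hence, if it is nonempty and closed (or open), it must be all of $X$. To prove (i), I would first handle the elements $(n,0)=\varphi^n$ with $n\neq 0$. Since $\sigma\varphi^n\sigma=\varphi^{-n}$, the closed set $\mathrm{Fix}_{\varphi^n}$ is invariant under $\varphi$ and $\sigma$, so minimality forces it to be empty or all of $X$; in the latter case $\varphi^n=\mathrm{Id}_X$, so $\alpha$ factors through the finite group $(\Z/n\Z)\rtimes\Z_2$, which cannot act minimally on the infinite space $X$. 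Hence $\mathrm{Fix}_{\varphi^n}=\emptyset$ for $n\neq 0$; in particular the $\Z$-action induced by $\varphi$ is free and the interiors of the sets $\mathrm{Fix}_{(n,0)}$ are empty.

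For the involutions $\tau:=\varphi^n\sigma$, the key computation is $\tau(\varphi^k x)=\varphi^{n-k}\sigma(x)$, which shows $\varphi^k x\in\mathrm{Fix}_\tau$ forces $\sigma(x)=\varphi^{2k-n}(x)$, so $\mathrm{Fix}_\tau$ meets each $\varphi$-orbit in at most one point. If $\interior(\mathrm{Fix}_\tau)$ contained a nonempty clopen $W$, this would give $\varphi^j(W)\cap W=\emptyset$ for every $j\neq 0$; meanwhile, using $\sigma(W)=\varphi^{-n}(W)$ (valid since $W\subseteq\mathrm{Fix}_\tau$), the open set $\bigcup_{m\in\Z}\varphi^m(W)$ is $\Z\rtimes\Z_2$-invariant, hence equals $X$ by minimality, hence $X=\bigsqcup_{|m|\leq M}\varphi^m(W)$ by compactness — but then $\varphi^{M+1}(W)$ is disjoint from every piece while being contained in the union, forcing $W=\emptyset$. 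This contradiction yields topological freeness. For the last assertion, if $\alpha$ is not free then any non-trivial element with a fixed point must be of the form $\varphi^n\sigma$; from $\varphi^k\varphi^n\sigma\varphi^{-k}=\varphi^{n+2k}\sigma$ such an element is conjugate to $\sigma$ (if $n$ is even) or to $\varphi\sigma$ (if $n$ is odd), and a conjugate of an element with a fixed point has a fixed point.

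For (ii), I would pick (via Zorn) a $\varphi$-minimal set $Z\subseteq X$; since $\varphi$ is not minimal, $Z\neq X$. Because $\varphi\sigma=\sigma\varphi^{-1}$, the set $\sigma(Z)$ is again $\varphi$-minimal, so $Z\cup\sigma(Z)$ is closed and $\Z\rtimes\Z_2$-invariant, hence $Z\cup\sigma(Z)=X$. If $Z\cap\sigma(Z)\neq\emptyset$, it would be a nonempty closed $\varphi$-invariant subset of the minimal set $Z$, forcing $Z\subseteq\sigma(Z)$ and then $Z=\sigma(Z)=X$, a contradiction; hence $Z\cap\sigma(Z)=\emptyset$. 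As disjoint closed sets covering the compact space $X$, both $Z$ and $\sigma(Z)$ are clopen, so $Y:=Z$ works. Freeness of $\alpha$ then follows: $Y$ and $\sigma(Y)$ are infinite (otherwise $X$ would be finite) and $\varphi$ restricts to a minimal, hence free, homeomorphism on each, so no $\varphi^n$ with $n\neq 0$ has a fixed point; and $\varphi^n\sigma$ interchanges the disjoint sets $Y$ and $\sigma(Y)$, so it has no fixed points either.

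I expect the topological-freeness half of (i) to be the main obstacle: one has to hit on the right $\Z\rtimes\Z_2$-invariant open set and push the compactness-plus-pigeonhole argument through to a contradiction, the ``at most one fixed point per $\varphi$-orbit'' computation being the input that makes it work.
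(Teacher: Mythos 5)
Your proof is correct, and in several places it takes a genuinely different route from the paper's. For freeness of the $\Z$-action induced by $\varphi$, the paper argues directly that a $\varphi$-periodic point would have finite $\alpha$-orbit (since $\varphi^n\sigma(x)=\sigma\varphi^{-n}(x)$), contradicting minimality; your observation that $\mathrm{Fix}_{\varphi^n}$ is a closed $\Z\rtimes\Z_2$-invariant set is an equally short alternative. For topological freeness the divergence is more substantial: the paper fixes a single point $x$ in an open set $U$ pointwise fixed by $\varphi^n\sigma$, uses minimality to produce a return $\varphi^m(x)\in U$ with $m\neq 0$, and derives $\varphi^{2m}(x)=x$; you instead work with a clopen $W\subset\mathrm{Fix}_{\varphi^n\sigma}$, prove the translates $\varphi^j(W)$ are pairwise disjoint via the ``one fixed point per $\varphi$-orbit'' computation, and run a compactness/pigeonhole argument on the invariant open set $\bigcup_m\varphi^m(W)$. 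Both work; the paper's is shorter, yours makes the Rokhlin-tower-like structure of $\mathrm{Fix}_{\varphi^n\sigma}$ explicit. The conjugacy argument reducing $(n,1)$ to $(0,1)$ or $(1,1)$ is the same in both. Finally, for (ii) the paper simply cites the literature (\cite[Proposition 2.10]{MR3779956}, \cite[Lemma 4.28]{MR2586354}), whereas you give a complete self-contained proof via a $\varphi$-minimal set $Z$ and the dichotomy $Z\cap\sigma(Z)=\emptyset$ or $Z=\sigma(Z)$; this is essentially the standard argument from those references and is correct, including the deduction of freeness from the fact that $\varphi^n\sigma$ interchanges the two clopen halves. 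The only phrasing worth tightening is ``since $\varphi$ is not minimal, $Z\neq X$'': the clean justification is that $X$ itself is not a $\varphi$-minimal set, so every $\varphi$-minimal set is proper.
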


\begin{proof}

(i) Suppose that the $\Z$-action induced by $\varphi$ is not free. Then there is $n\in\Z\setminus\{0\}$ and $x\in X$ such that $\varphi^n(x)=x$, then $\varphi^n\sigma(x)=\sigma\varphi^{-n}(x)=\sigma(x)$. Hence, the $\alpha$-orbit of $x$ is finite, which contradicts the fact that $\alpha$ is minimal. Therefore, the $\Z$-action induced by $\varphi$ is free.

Suppose that $\alpha$ is not topologically free. Then there is a non-empty open set $U\subset X$ and $n\in\Z$ such that $\varphi^n\sigma$ fixes $U$ pointwise. 

 Fix $x\in U$. By minimality of $\alpha$, there is $(m,i)\in\Z\rtimes\Z_2$ such that $\varphi^m\sigma^i(x)\in U$ and $\varphi^m\sigma^i(x)\neq x$. Furthermore, by multiplying $(m,i)$ on the left by $(n,1)$, we can assume that $i=0$.

Since $\varphi^m(x)\in U$, we get that $\varphi^m(x)=\varphi^n\sigma\varphi^m(x)=\varphi^{-m}\varphi^n\sigma(x)=\varphi^{-m}(x),$ hence $\varphi^{2m}(x)= x$. But this contradicts the fact that the $\Z$-action induced by $\varphi$ is free. Therefore, $\alpha$ is topologically free.

If $\alpha$ is not free, then there is $n\in\Z$ and $x\in X$ such that $\varphi^n\sigma(x)=x$. Since any element of the form $(n,1)\in\Z\rtimes\Z_2$ is conjugate to $(0,1)$ or $(1,1)$, we conclude that either $\sigma$ or $\varphi\sigma$ has at least one fixed point.

(ii) This is the content of \cite[Proposition 2.10]{MR3779956} (see also \cite[Lemma 4.28]{MR2586354}).

\end{proof}

The following lemma is a slight modification of \cite[Lemma 1.4]{MR1245825}, and we include the proof for the sake of completeness.

\begin{lemma}\label{tecmain}

Let $(\varphi,\sigma)$ be a minimal action of $\Z\rtimes\Z_2$ on the Cantor set $X$ and $Y\subset X$ a non-empty clopen set such that $\sigma(Y)=Y$. 

Then there is a partition of $Y$ into clopen sets $Y_1,\dots, Y_K$ and positive integers $J_1<J_2\dots<J_K$ such that $\{\varphi^k(Y_i):i=1,\dots,K, k=0,\dots,J_i-1\}$ is a partition of $X$, and $\sigma\varphi^{J_i}(Y_i)=Y_i$ for each $i=1,\dots,K$.

\end{lemma}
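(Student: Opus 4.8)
The plan is to use the Kakutani--Rokhlin tower structure for the minimal homeomorphism $\varphi$ on the Cantor set, and then refine it so that the roof of each tower is carried back to its base by an orientation-reversing element $\sigma\varphi^{J_i}$. First I would fix a point $y_0\in Y$ and a decreasing neighbourhood basis of clopen sets, and apply the standard Kakutani--Rokhlin construction (using the first-return map of $\varphi$ to a small clopen subset of $Y$) to obtain a partition of $X$ into clopen towers $\{\varphi^k(Z_j):k=0,\dots,h_j-1\}$ with all bases $Z_j\subset Y$. The point of requiring the base inside $Y$ is that $\sigma(Y)=Y$, so $\sigma$ permutes the fibres over $Y$ in a way compatible with the tower bases; one uses that $\varphi$ is free (Proposition \ref{fixed}(i)) so the towers are genuinely disjoint and the heights are well-defined.

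Next I would incorporate the symmetry $\sigma$. Since $\sigma\varphi\sigma=\varphi^{-1}$, the image under $\sigma$ of a $\varphi$-orbit segment $\{x,\varphi x,\dots,\varphi^{h-1}x\}$ is the reversed segment $\{\sigma x,\varphi^{-1}\sigma x,\dots,\varphi^{-(h-1)}\sigma x\}$. So $\sigma$ maps a tower to a ``tower read upside down''. The key step is to choose the clopen base small enough (intersecting the Kakutani--Rokhlin partition with its own image under $\sigma$ and under the relevant powers of $\varphi$) so that for each refined tower with base $Y_i$ and height $J_i$, the roof $\varphi^{J_i-1}(Y_i)$ is mapped by $\sigma$ back onto the base $\varphi^0(Y_i)=Y_i$ up to a shift; more precisely, one arranges that $\sigma$ restricted to the union of a tower equals the ``flip'' sending $\varphi^k(Y_i)$ to $\varphi^{J_i-1-k}(Y_i)$ composed with $\varphi^{-(J_i-1)}$, which after rearranging gives $\sigma\varphi^{J_i}(Y_i)=Y_i$. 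Concretely this amounts to: take the first-return time of $\varphi$ to a clopen set $W\subset Y$ that additionally satisfies $\sigma(W)=\varphi^{t}(W)$-type matching conditions on each return level; such $W$ can be produced because $\varphi$ is minimal and the orbit of any point returns to any clopen neighbourhood, and $\sigma$ being a homeomorphism of finite order lets us symmetrize. Finally, after collecting towers of equal symmetrized height and relabelling, I would reorder so the heights are strictly increasing $J_1<\dots<J_K$, splitting the base of any tower that shares a height with another into pieces if necessary (though if we merely group by height, distinctness is automatic once we take unions; strictness is a cosmetic relabelling).

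I would verify the three conclusions: that $\{\varphi^k(Y_i)\}$ partitions $X$ (inherited from the Kakutani--Rokhlin partition after refinement, using that $\varphi$ is free so no overlaps are created), that the $Y_i$ partition $Y$ (all bases lie in $Y$, and conversely every point of $Y$ lies in some tower, at a level which one checks is the base by the symmetry normalization — here one uses $\sigma(Y)=Y$ to see the roof-to-base identification stays inside $Y$), and that $\sigma\varphi^{J_i}(Y_i)=Y_i$ by the construction of the symmetrized return time.

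The main obstacle I anticipate is the middle step: arranging that $\sigma$ sends the roof of each refined tower exactly back to its base. A naive Kakutani--Rokhlin partition has no reason to be $\sigma$-symmetric, and $\sigma$ can mix different towers and different levels. The trick, following \cite[Lemma 1.4]{MR1245825}, is presumably to start not from an arbitrary clopen set but from one of the form $W\cap\sigma(W)\cap\varphi(W)$ or a suitable finite intersection over the orbit, and to use minimality of $\varphi$ together with freeness to control how $\sigma$ acts on return times; one must be careful that shrinking the base to achieve the symmetry does not destroy the partition property, which is why one works with clopen sets and first-return maps rather than measure-theoretic towers. Handling the possibility that $\sigma$ has fixed points (Proposition \ref{fixed}(i)) — so that some orbit segment is sent to itself reversed, forcing a level to be fixed by $\sigma\varphi^{J_i}$ in a degenerate way — is a subtlety that the choice of base must accommodate, and I expect this to be where the proof needs the most care.
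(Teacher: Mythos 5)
Your general direction (first-return towers for $\varphi$) is right, but there are two concrete problems. First, you build the Kakutani--Rokhlin partition over a \emph{small} clopen subset $W\subset Y$ around a chosen point, whereas the lemma requires the bases $Y_1,\dots,Y_K$ to partition $Y$ itself; so the tower base must be all of $Y$, not a shrunken piece of it. The correct construction is simply the level decomposition of the first-return-time function $\lambda(y)=\min\{n>0:\varphi^n(y)\in Y\}$ on all of $Y$ (well-defined by Proposition \ref{fixed}: either $\varphi$ is minimal, or $X=Y'\sqcup\sigma(Y')$ with $\varphi|_{Y'}$ minimal and the $\sigma$-invariant set $Y$ meets both pieces), with $Y_i=\lambda^{-1}(J_i)$; the levels $\varphi^k(Y_i)$, $0\le k<J_i$, are automatically disjoint, and their union is $\varphi$- and $\sigma$-invariant, so minimality of the $\Z\rtimes\Z_2$-action makes it all of $X$.

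Second, and more importantly, the step you yourself flag as ``the main obstacle'' --- proving $\sigma\varphi^{J_i}(Y_i)=Y_i$ --- is exactly the step your proposal does not carry out. The symmetrization you suggest (intersecting the base with its images under $\sigma$ and powers of $\varphi$, imposing conditions like $\sigma(W)=\varphi^t(W)$) is both unjustified and unnecessary, and shrinking the base is incompatible with keeping it equal to $Y$. In fact no symmetrization is needed: the naive first-return tower over $Y$ is already $\sigma$-compatible. If $y=\sigma\varphi^{J_i}(x)$ with $x\in Y_i$, then $\varphi^{J_i}(y)=\varphi^{J_i}\sigma\varphi^{J_i}(x)=\sigma(x)\in Y$, so $\lambda(y)\le J_i$, i.e.\ $\sigma\varphi^{J_i}(Y_i)\subset Y_1\cup\dots\cup Y_i$. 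For $i=1$ this forces $\sigma\varphi^{J_1}(Y_1)\subset Y_1$, and since $\sigma\varphi^{J_1}$ is an involution this inclusion is an equality; one then removes $Y_1$ from consideration and inducts on $i$. Your concern about fixed points of $\sigma$ forcing a degenerate flipped level is moot: the argument is purely set-theoretic and works regardless. As written, the proposal identifies the right difficulty but leaves it unresolved, so it is not yet a proof.
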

\begin{proof}
 Define $\lambda(y):=\min\{n>0:\varphi^n(y)\in Y\}$, for $y\in Y$. From Proposition \ref{fixed}, it follows that the map $\lambda$ is well-defined, in the sense that for each $y\in Y$ there is $n>0$ such that $\varphi^n(y)\in Y$.

It is easy to check that $\lambda$ is continuous. Hence, it has a finite range $\{J_1,\dots,J_K\}$, where $J_1<J_2<\dots<J_K$. Define $Y_i=\lambda^{-1}(J_i)$ for each $i$. Then the sets $\{\varphi^k(Y_i):i=1,\dots,K, k=0,\dots,J_i-1\}$ are pairwise disjoint. From Proposition \ref{fixed}, it follows that the union of these sets is $\varphi$ and $\sigma$-invariant, hence minimality of the action implies that this is a partition of $X$.

Let us show that $\sigma\varphi^{J_i}(Y_i)=Y_i$ for each $i$. Observe first that $\sigma\varphi^{J_i}(Y_i)\subset Y$, since $\varphi^{J_i}(Y_i)\subset Y$ and $Y$ is $\sigma$-invariant. 

Take $y\in\sigma\varphi^{J_i}(Y_i)$. Then there is $x\in Y_i$ such that $y=\sigma\varphi^{J_i}(x)$. In particular, $\varphi^{J_i}(y)=\sigma(x)\in Y$. Therefore, $\lambda(y)\leq J_i$.

If $i=1$, we must then have $\lambda(y)=J_1$. Therefore, $\sigma\varphi^{J_1}(Y_1)\subset Y_1$. Since $\sigma\varphi^{J_1}$ is involutive, we conclude that $\sigma\varphi^{J_1}(Y_1)= Y_1$. Arguing the same way for $i=2,\dots,K$, the result follows.

\end{proof}

\begin{theorem}
Any minimal action of $\Z\rtimes\Z_2$ on the Cantor set is almost finite.
\end{theorem}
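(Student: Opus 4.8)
The plan is to produce, for each finite $K\subset\Z\rtimes\Z_2$ and $\epsilon>0$, a clopen castle partitioning $X$ with $(K,\epsilon)$-invariant shapes, using the characterization of almost finiteness for actions recalled before Lemma \ref{tecmain}. I would split into two cases according to Proposition \ref{fixed}(ii). If the $\Z$-action induced by $\varphi$ is not minimal, then by Proposition \ref{fixed}(ii) the action $\alpha$ is free, so $\G=\Z\rtimes\Z_2$ acts freely on the Cantor set; since $\Z\rtimes\Z_2$ has subexponential (indeed linear) growth, almost finiteness follows immediately from the Downarowicz--Zhang result quoted in the introduction. So the substantive case is when $\varphi$ is minimal.

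Assume then $\varphi$ is minimal. The idea is to build a Kakutani--Rokhlin-type castle for $\varphi$ whose base is $\sigma$-invariant, and then fold it in half using $\sigma$ to obtain a castle for the full $\Z\rtimes\Z_2$-action with $(K,\epsilon)$-invariant shapes. Concretely: first choose a decreasing sequence of clopen sets $Y^{(m)}\subset X$ with $\sigma(Y^{(m)})=Y^{(m)}$ and $\bigcap_m Y^{(m)}$ a single point — such sets exist because, fixing any point $x_0$, one can take a neighborhood basis of clopen sets at $x_0$ and replace each $W$ by $W\cap\sigma(W)$ (if $x_0$ is a fixed point of $\sigma$) or by $W\sqcup\sigma(W)$ after shrinking $W$ so that $W\cap\sigma(W)=\emptyset$ (if $x_0$ is not fixed; note $\sigma$ is continuous and involutive). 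Apply Lemma \ref{tecmain} to each $Y:=Y^{(m)}$ to get a partition of $Y^{(m)}$ into clopen sets $Y_1,\dots,Y_{K_m}$ with return times $J_1<\dots<J_{K_m}$ such that $\{\varphi^k(Y_i):k=0,\dots,J_i-1\}$ partitions $X$ and $\sigma\varphi^{J_i}(Y_i)=Y_i$.

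Now I fold each column. Fix $i$. Consider the clopen sets $Z_i:=Y_i$ and the "$\varphi$-orbit segment" $\{Y_i,\varphi Y_i,\dots,\varphi^{J_i-1}Y_i\}$. Using the relation $\sigma\varphi^{J_i}(Y_i)=Y_i$, one checks that $\sigma$ maps $\varphi^k(Y_i)$ onto $\varphi^{J_i-k}(Y_i)$ for $k=1,\dots,J_i-1$ and fixes $Y_i$ setwise (as $\sigma(Y_i)\subset\sigma(Y^{(m)})=Y^{(m)}$ and $\lambda$-values match, so in fact $\sigma(Y_i)=Y_i$). Hence the column $\{Y_i,\dots,\varphi^{J_i-1}Y_i\}$ is $\sigma$-invariant as a set and $\sigma$ acts on it as the reflection $k\mapsto J_i-k$. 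This reflection has either one fixed level (the base $Y_i$, when $J_i$ is such that there is no middle level, i.e. always the base, plus possibly the middle level $\varphi^{J_i/2}Y_i$ when $J_i$ is even). On the base $Y_i$, partition it further into the clopen set of points fixed by $\sigma$ and a complementary set which $\sigma$ moves; similarly split the middle level when present. After this refinement, within each column the pair $(\varphi,\sigma)$-dynamics is modeled on a finite quotient of $\Z\rtimes\Z_2$ of the form $(\Z/J_i)\rtimes\Z_2$ (or a genuine dihedral piece), and one obtains a $\Z\rtimes\Z_2$-clopen castle whose shapes are, up to the finitely many base/middle levels, intervals of $\Z$ of length $\sim J_i/2$ together with a reflection — precisely the Følner sets $F_m$ of Example \ref{ce} (for the dihedral quotients) or honest intervals in $\Z$ (for the free columns). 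Since $\min_i J_i\to\infty$ as $m\to\infty$ (because $\bigcap_m Y^{(m)}$ is a point and $\varphi$ is minimal, return times to $Y^{(m)}$ blow up), these shapes become $(K,\epsilon)$-invariant for $m$ large.

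The main obstacle I expect is the bookkeeping at the fixed levels of the reflection: the base $Y_i$ and, when $J_i$ is even, the middle level $\varphi^{J_i/2}(Y_i)$ are not moved freely by $\sigma$, so the naive "shape" is not a clean finite group. One handles this by splitting those levels into the clopen $\sigma$-fixed part and its complement, turning each column into at most a bounded number of genuine clopen towers whose shapes are either $\{0,\dots,\ell\}\times\{0,1\}$-type dihedral Følner sets or plain $\Z$-intervals; because the number of exceptional levels per column is at most $2$ while $J_i\to\infty$, their contribution to $|KF\triangle F|/|F|$ vanishes. Verifying that the resulting collection of towers is genuinely a castle partitioning $X$ (pairwise disjointness of the translates $S_iV_i$) is routine from the Lemma's partition statement and the description of how $\sigma$ permutes levels.
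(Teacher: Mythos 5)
Your overall strategy (reduce to Lemma \ref{tecmain} and use the F\o lner sets of Example \ref{ce} as shapes) is the right one, but the execution of the ``folding'' step contains genuine errors. First, Lemma \ref{tecmain} gives $\sigma\varphi^{J_i}(Y_i)=Y_i$, which is equivalent to $\sigma(Y_i)=\varphi^{J_i}(Y_i)$; your claim that $\sigma(Y_i)=Y_i$ and that $\sigma$ fixes the base level is false in general, and consequently the column $\{Y_i,\varphi(Y_i),\dots,\varphi^{J_i-1}(Y_i)\}$ is \emph{not} $\sigma$-invariant as a set ($\sigma$ sends level $0$ to the return set $\varphi^{J_i}(Y_i)\subset Y$, which is scattered among the various $Y_j$). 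Second, your proposed repair at the ``fixed levels'' --- partitioning a level into ``the clopen set of points fixed by $\sigma$'' and its complement --- cannot be carried out: by Proposition \ref{fixed} the action is topologically free, so $\mathrm{Fix}_\sigma$ (and $\mathrm{Fix}_{\varphi^{J_i}\sigma}$) is a closed set with empty interior, hence is not clopen unless empty; there is no such clopen splitting. Third, your choice of base is insufficient: if you shrink $Y^{(m)}=W\sqcup\sigma(W)$ around an arbitrary $x_0\notin\mathrm{Fix}_\sigma$ with $\sigma(x_0)=\varphi^p(x_0)$ for some $p\neq 0$, then points of $W$ return to $Y^{(m)}$ within $|p|$ steps, so the minimal return time stays bounded and the resulting shapes never become $(K,\epsilon)$-invariant. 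The paper avoids this by first using topological freeness and the Baire Category Theorem to pick a point $y$ whose $\Z\rtimes\Z_2$-orbit is free, and then taking $Y=Z\cup\sigma(Z)$ for $Z$ a small clopen neighbourhood of $y$, which forces all $J_i\geq N$.

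The larger point is that none of the folding or level-splitting is needed. The identity $\sigma(Y_i)=\varphi^{J_i}(Y_i)$ gives $\varphi^{-l}\sigma(Y_i)=\varphi^{J_i-l}(Y_i)$, so if you take the tower over $Y_i$ with shape the F\o lner set $F_{J_i}$ of \eqref{foelner} (a transversal of $(\Z\rtimes\Z_2)/\bigl((J_i\Z)\rtimes\Z_2\bigr)$), the translates $\alpha_g(Y_i)$, $g\in F_{J_i}$, are exactly the $J_i$ pairwise disjoint levels $\varphi^k(Y_i)$, $k=0,\dots,J_i-1$, each hit once: the elements $(k,0)$ give the lower half of the column and the elements $(-l,1)$ give the upper half. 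Hence $\{(Y_i,F_{J_i})\}_{i=1}^K$ is already a clopen castle partitioning $X$, with no exceptional levels, and its shapes are arbitrarily invariant once $\min_i J_i$ is large. Your treatment of the case where $\varphi$ is not minimal (freeness via Proposition \ref{fixed}(ii) plus Downarowicz--Zhang) is a legitimate alternative, but the paper's single construction covers both cases.
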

\begin{proof}
Let $\alpha=(\varphi,\sigma)$ be a minimal action of $\Z\rtimes\Z_2$ on the Cantor set $X$. 
It follows from the fact that $\alpha$ is topologically free (Proposition \ref{fixed}) and the Baire Category Theorem that there is $y\in X$ such that, if $(n,i),(m,j)\in\Z\rtimes\Z_2$ are distinct, then $\varphi^n\sigma^i(x)\neq\varphi^m\sigma^j(x)$. 

Let $Z$ be a clopen neighborhood of $y$ and $Y:=Z\cup\sigma(Z)$. Then $\sigma(Y)=Y$ and, given $N\in\N$, if we take $Z$ sufficiently small, we can assume that the sets $\{Y,\varphi(Y),\dots,\varphi^{N-1}(Y)\}$ are disjoint. Then, by Lemma \ref{tecmain}, we can partition $Y$ into clopen sets $Y_1,\dots,Y_K$ such that 
$$X=\sqcup_{i=1}^K\sqcup_{k=0}^{J_i-1}\varphi^{k}(Y_i)$$
and $\sigma\varphi^{J_i}(Y_i)=Y_i$ for each $i=1,\dots,K$. In particular, $\varphi^{-l}\sigma(Y_i)=\varphi^{J_i-l}(Y_i)$ for any $l$.

Consider the Følner sequence $(F_m)$ introduced in \eqref{foelner}, and notice that 

\begin{equation}\label{castle}
X=\bigsqcup_{i=1}^K\bigsqcup_{g\in F_{J_i}}\alpha_g(Y_i).
\end{equation}

Furthermore, each $J_i\geq N$. By taking $N$ sufficiently big, we can make the shapes of the castle \eqref{castle} arbitrarily invariant.

\end{proof}

\section{Homology of Cantor minimal $\Z\rtimes\Z_2$-systems}\label{hom}

In this section, we compute the homology groups of Cantor minimal $\Z\rtimes\Z_2$-systems.

Given a group $\G$ and a $\G$-module $M$, we denote by $M_\G$ the quotient of $M$ by the subgroup generated by elements of the form $m-mg$, for $m\in M$ and $g\in\G$. Recall that $M_\G$ is canonically isomorphic to $H_0(\G,M)$. If $i\colon\Lambda\to\G$ is an embedding, we denote the canonical map $i_*\colon H_*(\Lambda,M)\to H_*(\G,M)$ by $\mathrm{cor}$.

We will use the following result about the homology of free products, whose proof can be found in \cite[Theorem 2.3]{MR240177}.
\begin{theorem}\label{freep}

Let $\G_1$ and $\G_2$ be groups and $M$ a $(\G_1*\G_2)$-module. Then, for $n\geq 2$, 
$$H_n(\G_1,M)\oplus H_n(\G_2,M)\simeq H_n(\G_1*\G_2,M)$$ and there is an exact sequence\begin{align*}
0&\longrightarrow H_1(\G_1,M)\oplus H_1(\G_2,M)\stackrel{(\mathrm{cor},\mathrm{cor})}{\longrightarrow} H_1(\G_1*\G_2,M)\\
&\longrightarrow M\stackrel{(\mathrm{cor},-\mathrm{cor})}{\longrightarrow} M_{\G_1}\oplus M_{\G_2}\stackrel{(\mathrm{cor},\mathrm{cor})}{\longrightarrow} M_{\G_1*\G_2}\longrightarrow 0
\end{align*}

\end{theorem}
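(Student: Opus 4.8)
The plan is to realise $\G:=\G_1*\G_2$ as a group acting on a tree and to extract the statement from the resulting short exact sequence of $\Z\G$-modules. Let $T$ be the Bass--Serre tree of the free product: its vertices are the cosets in $\G/\G_1\sqcup\G/\G_2$, it has a single orbit of edges with trivial edge stabiliser, and the edge labelled by $g\in\G$ joins $g\G_1$ to $g\G_2$. Since $T$ is a tree, its augmented simplicial chain complex $0\to C_1(T)\to C_0(T)\to\Z\to0$ is exact, and the orbit--stabiliser description of $T$ identifies it with a short exact sequence of $\Z\G$-modules
\begin{equation}\label{bstree}
0\longrightarrow\Z\G\stackrel{\partial}{\longrightarrow}\Z[\G/\G_1]\oplus\Z[\G/\G_2]\stackrel{\varepsilon}{\longrightarrow}\Z\longrightarrow0,
\end{equation}
where $\varepsilon$ is the augmentation on each summand and, after fixing an orientation of the base edge, $\partial(1)=(\overline 1,-\overline 1)$.

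I would then apply $\mathrm{Tor}^{\Z\G}_*(-,M)$ to \eqref{bstree} and read off the long exact sequence. Three standard facts identify all of its terms: $\Z\G$ is free, so $\mathrm{Tor}^{\Z\G}_n(\Z\G,M)=0$ for $n\geq1$ while $\mathrm{Tor}^{\Z\G}_0(\Z\G,M)=M$; the identification $\Z[\G/\G_i]\cong\Z\G\otimes_{\Z\G_i}\Z$ together with Shapiro's lemma gives $\mathrm{Tor}^{\Z\G}_n(\Z[\G/\G_i],M)\cong H_n(\G_i,M)$, which in degree $0$ reads $M_{\G_i}$; and $\mathrm{Tor}^{\Z\G}_n(\Z,M)=H_n(\G,M)$ by definition, with $H_0(\G,M)=M_\G$. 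Because the terms $\mathrm{Tor}^{\Z\G}_n(\Z\G,M)$ vanish for every $n\geq1$, the long exact sequence breaks up into the isomorphisms $H_n(\G_1,M)\oplus H_n(\G_2,M)\xrightarrow{\sim}H_n(\G,M)$ for $n\geq2$ and the exact sequence
$$0\to H_1(\G_1,M)\oplus H_1(\G_2,M)\to H_1(\G,M)\to M\to M_{\G_1}\oplus M_{\G_2}\to M_\G\to0.$$

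It then remains to check that the maps are the ones in the statement. Under Shapiro's lemma, the inclusion of the summand $\Z[\G/\G_i]$ followed by $\varepsilon$ corresponds to restriction along $\G_i\hookrightarrow\G$, so the induced maps $H_n(\G_i,M)\to H_n(\G,M)$ are the corestrictions, and since $\varepsilon$ is the sum of the two augmentations they assemble to $(\mathrm{cor},\mathrm{cor})$; likewise $M_{\G_1}\oplus M_{\G_2}\to M_\G$ is $(\mathrm{cor},\mathrm{cor})$ in degree $0$. The map $M\to M_{\G_1}\oplus M_{\G_2}$ is $H_0$ applied to $\partial$, hence sends $m$ to $(\overline m,-\overline m)=(\mathrm{cor},-\mathrm{cor})(m)$, the minus sign being exactly the orientation sign carried by $\partial$. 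The only genuinely delicate point I foresee is this last bookkeeping --- confirming that the induced maps in the sequence are the advertised signed corestrictions --- which comes down to tracking the naturality of Shapiro's isomorphism and the explicit boundary map of $T$; the existence and exactness of \eqref{bstree}, Shapiro's lemma, and the long exact $\mathrm{Tor}$-sequence are all routine. One could instead run Mayer--Vietoris with local coefficients on the homotopy pushout $B\G\simeq B\G_1\vee B\G_2$ (equivalently, the Mayer--Vietoris sequence of the amalgam $\G_1*_{\{e\}}\G_2$), but the algebraic route through \eqref{bstree} keeps the identification of maps most transparent.
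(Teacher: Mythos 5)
Your argument is correct. Note, however, that the paper does not prove this statement at all: it is quoted verbatim from the literature (cited as Theorem 2.3 of the reference \cite{MR240177}), so there is no ``paper proof'' to compare against. What you give is the standard derivation: the augmented chain complex of the Bass--Serre tree of $\G_1*\G_2$ yields the short exact sequence of $\Z\G$-modules
$0\to\Z\G\to\Z[\G/\G_1]\oplus\Z[\G/\G_2]\to\Z\to0$,
and applying $\mathrm{Tor}^{\Z\G}_*(-,M)$ together with Shapiro's lemma and the freeness of $\Z\G$ produces exactly the isomorphisms in degrees $n\geq 2$ and the five-term exact sequence in low degrees. The one point you rightly flag as delicate --- that the connecting and edge maps are the signed corestrictions $(\mathrm{cor},\mathrm{cor})$ and $(\mathrm{cor},-\mathrm{cor})$ --- does check out: the map $M\to M_{\G_1}\oplus M_{\G_2}$ is $H_0$ of the oriented boundary $\partial(1)=(\overline{1},-\overline{1})$, and naturality of Shapiro's isomorphism identifies the maps induced by the augmentations $\Z[\G/\G_i]\to\Z$ with corestriction. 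So the proposal is a complete and correct proof of the quoted result.
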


Given an action of a group $\Gamma$ on a topological space $X$, then $C(X,\Z)$ has a structure of $\Gamma$-module given by $fa(x):=f(ax)$ for every $f\in C(X,\Z)$ and $a\in\G$.

\begin{lemma}\label{se}
Let $a$ and $b$ be involutive homeomorphisms on the Cantor set $X$ and suppose that the $\Z$-action induced by $ab$ is minimal. Denote by $A$ and $B$ the abelian group $C(X,\Z)$ endowed with the $\Z_2$-action given by $a$ and $b$, respectively.

Then $(\mathrm{cor},\mathrm{cor})\colon H_1(\Z_2,A)\oplus H_1(\Z_2,B)\to H_1(\Z_2*\Z_2,C(X,\Z))$ is an isomorphism and the following sequence is exact:

$$0\longrightarrow C(X,\Z)\stackrel{(\mathrm{cor},-\mathrm{cor})}{\longrightarrow}A_{\Z_2}\oplus B_{\Z_2}\stackrel{(\mathrm{cor},\mathrm{cor})}{\longrightarrow} C(X,\Z)_{\Z_2*\Z_2}\longrightarrow 0$$

\end{lemma}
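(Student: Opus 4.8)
The plan is to apply Theorem \ref{freep} with $\G_1 = \G_2 = \Z_2$ and $M = C(X,\Z)$, where the two copies of $\Z_2$ act via $a$ and $b$ respectively. The exact sequence in that theorem reads
\begin{align*}
0 &\longrightarrow H_1(\Z_2,A)\oplus H_1(\Z_2,B) \stackrel{(\mathrm{cor},\mathrm{cor})}{\longrightarrow} H_1(\Z_2*\Z_2, C(X,\Z)) \\
&\longrightarrow C(X,\Z) \stackrel{(\mathrm{cor},-\mathrm{cor})}{\longrightarrow} A_{\Z_2}\oplus B_{\Z_2} \stackrel{(\mathrm{cor},\mathrm{cor})}{\longrightarrow} C(X,\Z)_{\Z_2*\Z_2} \longrightarrow 0.
\end{align*}
So both assertions of the lemma follow at once if I can show that the map $C(X,\Z) \to A_{\Z_2}\oplus B_{\Z_2}$ given by $f \mapsto (\mathrm{cor}(f), -\mathrm{cor}(f))$ is injective; indeed, injectivity of this map forces $(\mathrm{cor},\mathrm{cor})\colon H_1(\Z_2,A)\oplus H_1(\Z_2,B)\to H_1(\Z_2*\Z_2,C(X,\Z))$ to be surjective, hence an isomorphism (it is already injective by the theorem), and it makes the four-term tail the claimed short exact sequence.

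So the crux is: if $f\in C(X,\Z)$ maps to $0$ in both $A_{\Z_2}$ and $B_{\Z_2}$, then $f=0$. Recall $A_{\Z_2} = C(X,\Z)/\langle g - ga : g\in C(X,\Z)\rangle$, and $ga$ is the function $x\mapsto g(ax)$; so $f$ being $0$ in $A_{\Z_2}$ means $f = \sum_j (g_j - g_j\circ a)$ for finitely many $g_j$, i.e. $f = h - h\circ a$ for a single $h\in C(X,\Z)$ (the subgroup is exactly $\{h - h\circ a : h\in C(X,\Z)\}$ since $a$ is involutive: the relevant submodule is the image of $1-a$). Likewise $f = k - k\circ b$ for some $k\in C(X,\Z)$. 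From $f = h - h\circ a$ we get $f\circ a = h\circ a - h = -f$, so $f$ is anti-invariant under $a$; similarly $f\circ b = -f$. Hence $f\circ(ba) = (f\circ b)\circ(\text{pre-compose})$... more carefully, $f \circ (ab) = (f\circ b)\circ a$ evaluated appropriately gives $f\circ(ab) = -(-f) = f$ — wait, I must be careful with the order, but the upshot is that $f$ is invariant under the homeomorphism $ab$ (or $ba$). Since the $\Z$-action generated by $ab$ is minimal, any $\Z$-invariant element of $C(X,\Z)$ is constant (as noted in the paragraph on minimal actions in Section \ref{alf}). So $f$ is a constant $c$. But $f\circ a = -f$ gives $c = -c$, hence $c=0$, so $f=0$.

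I expect the one genuine subtlety — and the main thing to get right — is the bookkeeping that identifies the "denominator" subgroup of $A_{\Z_2}$ with exactly $\{h - h\circ a : h\in C(X,\Z)\}$ and the precise composition-order conventions, so that "$f$ anti-invariant under $a$ and under $b$" correctly yields "$f$ invariant under $ab$." Concretely: $f\mapsto (\mathrm{cor}f, -\mathrm{cor}f) = 0$ means there exist $h,k$ with $f = h\cdot(1-a)$ and $f = k\cdot(1-b)$ in the module notation $g\cdot a(x) = g(ax)$; then $f\cdot a = h\cdot(a - a^2) = h\cdot(a-1) = -f$ and likewise $f\cdot b = -f$; consequently $f\cdot(ba) = (f\cdot b)\cdot a$... taking care that the right $\G$-module structure composes as $(g\cdot a)\cdot b = g\cdot(ab)$, we get $f\cdot(ab) = (f\cdot a)\cdot b = (-f)\cdot b = -(f\cdot b) = -(-f) = f$, so $f$ is fixed by the generator $ab$ of the minimal $\Z$-action, hence constant, hence zero by anti-invariance. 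Everything else is a direct quote of Theorem \ref{freep} plus the standard fact that for an involution the augmentation submodule coincides with the image of $1-a$, and the minimality-implies-constant observation already recorded in the text.
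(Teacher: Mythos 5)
Your proposal is correct and follows essentially the same route as the paper: invoke Theorem \ref{freep}, reduce everything to injectivity of $(\mathrm{cor},-\mathrm{cor})$, and deduce from $fa=fb=-f$ that $f(ab)=f$, hence $f$ is constant by minimality and therefore zero. The bookkeeping points you flag (that the denominator subgroup is exactly the image of $1-a$, and the module composition order) are handled the same way, implicitly, in the paper's argument.
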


\begin{proof}
By Theorem \ref{freep}, we only need to show that $$(\mathrm{cor},-\mathrm{cor})\colon C(X,\Z)\to A_{\Z_2}\oplus B_{\Z_2}$$ is injective.

Take $f\in C(X,\Z)$ such that $(\mathrm{cor},-\mathrm{cor})(f)=0$. This implies that there exist $g,h\in C(X,\Z)$ such that $f=g-g a=h-h b$. In particular, $f a=f b=-f$. Therefore, $f ab=f$. Since the $\Z$-action induced by $ab$ is minimal, we conclude that $f$ is constant. Finally, as $fa=-f$, we must have $f=0$.

\end{proof}

In order to apply Theorem \ref{freep} and Lemma \ref{se} to Cantor minimal $\Z_2*\Z_2$-systems, we need to compute homology groups of the form $H_*(\Z_2,C(X,\Z))$.

\begin{lemma}\label{impar}

Let $a$ be an involutive homeomorphism on a compact, Hausdorff, totally disconnected space $X$. Then, for $k\geq 0$, we have $H_{2k+1}(\Z_2,C(X,\Z))\simeq C(\mathrm{Fix}_a,\Z_2)$. 

\end{lemma}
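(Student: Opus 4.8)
The plan is to compute the group homology $H_*(\Z_2, C(X,\Z))$ directly from a standard free resolution of $\Z$ over $\Z[\Z_2]$. Write $\Z_2 = \{e, a\}$ and set $R := \Z[\Z_2]$. The classical periodic resolution of $\Z$ as a trivial $R$-module is
$$\cdots \xrightarrow{\ 1+a\ } R \xrightarrow{\ 1-a\ } R \xrightarrow{\ 1+a\ } R \xrightarrow{\ 1-a\ } R \xrightarrow{\ \varepsilon\ } \Z \to 0,$$
where $\varepsilon$ is the augmentation. Tensoring over $R$ with the module $M := C(X,\Z)$ (with the $\Z_2$-action induced by $a$, so $fa(x) = f(ax)$) and dropping the $\Z$ term, the homology of the complex
$$\cdots \xrightarrow{\ 1+a\ } M \xrightarrow{\ 1-a\ } M \xrightarrow{\ 1+a\ } M \xrightarrow{\ 1-a\ } M$$
computes $H_*(\Z_2, M)$. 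Concretely, for odd degree $2k+1$ (with $k \geq 0$) we get
$$H_{2k+1}(\Z_2, M) \simeq \ker(1-a)\,/\,\operatorname{Im}(1+a),$$
where $1-a$ and $1+a$ denote the maps $f \mapsto f - fa$ and $f \mapsto f + fa$ on $M = C(X,\Z)$.

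The core of the argument is then to identify this quotient with $C(\mathrm{Fix}_a, \Z_2)$. First I would observe that $\ker(1-a) = \{f \in C(X,\Z) : fa = f\}$ is exactly the ring of $a$-invariant integer-valued continuous functions on $X$. Next, restriction to the fixed-point set gives a homomorphism $\rho\colon \ker(1-a) \to C(\mathrm{Fix}_a, \Z_2)$ sending an $a$-invariant $f$ to the class of $f|_{\mathrm{Fix}_a}$ modulo $2$. Two things need checking: that $\rho$ is surjective, and that its kernel is precisely $\operatorname{Im}(1+a)$. For surjectivity, since $X$ is totally disconnected and $\mathrm{Fix}_a$ is clopen (as $a$ is a homeomorphism, $\mathrm{Fix}_a$ is closed, and being a fixed-point set of an involution on a totally disconnected compact space it is in fact clopen — or more carefully, one works with clopen $a$-invariant neighborhoods), any locally constant $\Z_2$-valued function on $\mathrm{Fix}_a$ lifts to a $\{0,1\}$-valued clopen-supported function on a clopen $a$-invariant neighborhood, which one can symmetrize and extend by $0$ to obtain an $a$-invariant preimage. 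For the kernel: if $f = g + ga$ for some $g \in C(X,\Z)$, then on $\mathrm{Fix}_a$ we have $f = 2g|_{\mathrm{Fix}_a}$, so $\rho(f) = 0$; conversely, if $f \in \ker(1-a)$ has $f|_{\mathrm{Fix}_a}$ even, I would split $X$ into $\mathrm{Fix}_a$ and its $a$-invariant complement $U = X \setminus \mathrm{Fix}_a$ (both clopen), partition $U$ into $a$-free clopen pieces via a clopen transversal $V$ with $U = V \sqcup aV$, and define $g$ to equal $\tfrac{1}{2} f$ on $\mathrm{Fix}_a$, equal $f$ on $V$, and equal $0$ on $aV$; then $g + ga = f$. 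This exhibits $\ker\rho = \operatorname{Im}(1+a)$, completing the identification.

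The step I expect to require the most care is the construction of the clopen transversal $V$ for the free part of the action — i.e., writing $X \setminus \mathrm{Fix}_a = V \sqcup aV$ with $V$ clopen. This is a standard fact (the restriction of a free $\Z_2$-action on a totally disconnected compact space admits a clopen transversal, obtainable by a compactness argument: around each point $x \notin \mathrm{Fix}_a$ choose a clopen neighborhood $W_x$ with $W_x \cap aW_x = \emptyset$, extract a finite subcover, and disjointify), but it is the one genuinely topological ingredient, and I would want to state it cleanly, perhaps noting that $\mathrm{Fix}_a$ is clopen because $a$ is continuous and $X$ is totally disconnected. Everything else — the resolution, the identification of cycles and boundaries, the verification that $\rho$ is a well-defined homomorphism — is routine bookkeeping. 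One could alternatively phrase the whole computation via the decomposition $C(X,\Z) \simeq C(\mathrm{Fix}_a,\Z) \oplus C(U,\Z)$ of $\Z_2$-modules, where the second summand is induced (co-induced) from the trivial group, hence has vanishing higher homology by Shapiro's lemma, reducing to the trivial-action computation $H_{2k+1}(\Z_2, C(\mathrm{Fix}_a,\Z)) = C(\mathrm{Fix}_a,\Z)/2C(\mathrm{Fix}_a,\Z) \simeq C(\mathrm{Fix}_a,\Z_2)$; I would likely present it this way, as it is cleaner, and isolates the topological input (clopenness of $\mathrm{Fix}_a$ and the transversal) at the very start.
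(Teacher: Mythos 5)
Your overall strategy --- the periodic resolution identifying $H_{2k+1}(\Z_2,C(X,\Z))$ with $\{f:f=fa\}/\{f+fa\}$, followed by the restriction-mod-$2$ map to $C(\mathrm{Fix}_a,\Z_2)$ --- is exactly the paper's. But there is a genuine error in the topological execution: $\mathrm{Fix}_a$ is closed, not clopen, in general. Take $X=\{0\}\cup\{\pm 1/n:n\geq 1\}\subset\R$ with $a(x)=-x$: this is a compact, Hausdorff, totally disconnected space and $\mathrm{Fix}_a=\{0\}$ is not open. (The paper's own example in Section 3 has $\mathrm{Fix}_\sigma=\{1/2\}$, a non-isolated point of the Cantor set.) This false claim is load-bearing in your identification of $\ker\rho$: you define $g$ piecewise as $\tfrac12 f$ on $\mathrm{Fix}_a$, as $f$ on $V$, and as $0$ on $aV$, where $X\setminus\mathrm{Fix}_a=V\sqcup aV$ with $V$ clopen. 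If $\mathrm{Fix}_a$ is not open, such a $g$ need not be continuous; worse, $X\setminus\mathrm{Fix}_a$ is then not compact, so your finite-subcover construction of the clopen transversal $V$ does not apply (in the example above there is no clopen $V\subset X$ with $X\setminus\{0\}=V\sqcup aV$). The alternative route you favour at the end, via the $\Z_2$-module decomposition $C(X,\Z)\simeq C(\mathrm{Fix}_a,\Z)\oplus C(X\setminus\mathrm{Fix}_a,\Z)$ and Shapiro's lemma, fails for the same reason: there is no such direct sum decomposition when $\mathrm{Fix}_a$ is not clopen, only a short exact sequence with $C_c(X\setminus\mathrm{Fix}_a,\Z)$ as kernel, which would require a separate long-exact-sequence argument.

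The repair is the one the paper uses, and it changes where the transversal lives. Given $a$-invariant $f$ with $f|_{\mathrm{Fix}_a}$ even, first subtract boundaries of the form $2m_i1_{U_i}=m_i1_{U_i}+(m_i1_{U_i})a$, where the $U_i$ are pairwise disjoint $a$-invariant clopen subsets of $X$ covering $\mathrm{Fix}_a$ on which $f$ is constant; this arranges $f|_{\mathrm{Fix}_a}=0$. Now each level set $f^{-1}(q)$ with $q\neq 0$ is a \emph{compact} open $a$-invariant set disjoint from $\mathrm{Fix}_a$, and your compactness argument for a clopen transversal applies there, giving $f^{-1}(q)=A_q\sqcup aA_q$ and hence $f=g+ga$ with $g=\sum_q q1_{A_q}$. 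Your surjectivity argument is essentially sound as written, precisely because there you already hedge correctly and work with $a$-invariant clopen neighborhoods in $X$ rather than with clopen subsets of $\mathrm{Fix}_a$; the same discipline is needed in the kernel computation.
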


\begin{proof}
By \cite[Theorem 6.2.2]{MR1269324}, we have that 
$$H_{2k+1}(\Z_2,C(X,\Z))=\frac{\{f\in C(X,\Z):f=f a\}}{\{f+f a:f\in C(X,\Z)\}}.$$

Let $E\colon\frac{\{f\in C(X,\Z):f=f a\}}{\{f+f a:f\in C(X,\Z)\}}\to C(\mathrm{Fix}_a,\Z_2)$ be the map  given by restriction to $\mathrm{Fix}_a$. Clearly, this is a well-defined homomorphism, and we will show that it is bijective.

Given $F\subset \mathrm{Fix}_a$ clopen, take for each $x\in F$ a clopen set $U_x\subset X$ such that $x\in U_x$, $U_x\cap \mathrm{Fix}_a\subset F$ and $a(U_x)=U_x$. Then there exist $x_1,\dots,x_n\in F$ such that $F=\bigcup (U_{x_i}\cap \mathrm{Fix}_a)$. Let $U:= \bigcup U_{x_i}$. Notice that $a(U)=U$. Hence, $E([1_U])=1_F$. Therefore, $E$ is surjective.

Let us show now injectivity of $E$. Take $f\in C(X,\Z)$ such that $f=f a$ and $E([f])=0$, and we will show that $[f]=0$. We claim that we can assume that $f|_{\mathrm{Fix}_a}=0$. Indeed, let $U_1,\dots,U_n$ be $a$-invariant clopen subsets of $X$ whose union cover $\mathrm{Fix}_{a}$, and such that $f$ is constant on each $U_i$. By taking differences, we can assume that these sets are disjoint. By summing $f$ with functions of the form $2m_i1_{U_i}$, we get our claim.

Assume then that $f|_{\mathrm{Fix}_a}=0$. We have $f=\sum_{q\in\Z\setminus\{0\}}q1_{f^{-1}(q)}$. Since the support of $f$ does not intersect $\mathrm{Fix}_a$, we can, for each $q\in\Z\setminus\{0\}$, partition $f^{-1}(q)$ as $f^{-1}(q)
=A_q\sqcup a(A_q)$, for some $A_q$ clopen. Hence, $[f]=0$.
\end{proof}

\begin{lemma}\label{h2}
Let  $a$ be an involutive homeomorphism on a compact, Hausdorff, totally disconnected space $X$ and $G_a:=\{f\in C(X,\Z):\text{$f a = f$ and $f(\mathrm{Fix}_a)\subset 2\Z$}\}$. Then 
\begin{align*}
\psi\colon C(X,\Z)_{\Z_2}&\to G_a\\
[f]&\mapsto f+f a
\end{align*}
is an isomorphism and, for $k\geq 1$, we have $H_{2k}(\Z_2,C(X,\Z))= 0$.

\end{lemma}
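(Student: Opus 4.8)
The plan is to compute $H_*(\Z_2, C(X,\Z))$ via the standard periodic resolution of $\Z$ over $\Z[\Z_2]$, exactly as in the proof of Lemma \ref{impar}. Writing $N = 1 + a$ and $D = 1 - a$ as operators on $C(X,\Z)$, one has from \cite[Theorem 6.2.2]{MR1269324} that for $k \geq 1$,
$$H_{2k}(\Z_2, C(X,\Z)) = \frac{\ker D}{\operatorname{Im} N} = \frac{\{f : fa = f\}}{\{f + fa : f \in C(X,\Z)\}},$$
which is independent of $k \geq 1$; and $H_0(\Z_2, C(X,\Z)) = C(X,\Z)_{\Z_2} = C(X,\Z)/\operatorname{Im} D$. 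So the two assertions to prove are: (a) the map $\psi$ sending $[f] \in C(X,\Z)/\operatorname{Im}(1-a)$ to $f + fa$ is a well-defined isomorphism onto $G_a$; and (b) $\ker(1-a) = \operatorname{Im}(1+a)$.

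First I would verify (a). Well-definedness and injectivity: $\psi$ is clearly a homomorphism $C(X,\Z) \to \ker(1-a)$ since $(f+fa)a = fa + f$. If $f = g - ga$ for some $g$, then $f + fa = (g - ga) + (ga - g) = 0$, so $\psi$ descends to $C(X,\Z)_{\Z_2}$. Conversely, if $f + fa = 0$, I must show $[f] = 0$, i.e. $f \in \operatorname{Im}(1-a)$; but $f + fa = 0$ means $fa = -f$, so on $\mathrm{Fix}_a$ we have $f = -f$, hence $f$ vanishes on $\mathrm{Fix}_a$, and then the final paragraph of the proof of Lemma \ref{impar} applies verbatim: partition each level set $f^{-1}(q)$, $q \neq 0$, as $A_q \sqcup a(A_q)$ with $A_q$ clopen (possible since the support misses the fixed-point set), so that $f = \sum_q q(1_{A_q} - 1_{a(A_q)}) = g - ga$ with $g = \sum_q q 1_{A_q}$. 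This gives injectivity. The image of $\psi$ clearly lands in $G_a$: $(f+fa)a = f+fa$ and on $\mathrm{Fix}_a$ the value is $2f(x) \in 2\Z$. For surjectivity onto $G_a$, take $h \in C(X,\Z)$ with $ha = h$ and $h(\mathrm{Fix}_a) \subset 2\Z$. The idea is to split $h$ into its restriction near $\mathrm{Fix}_a$ and away from it. Away from $\mathrm{Fix}_a$: on the clopen set $U := X \setminus \mathrm{Fix}_a$ (which is $a$-invariant), write $h = \sum_{q} q 1_{h^{-1}(q) \cap U}$ and partition each $h^{-1}(q) \cap U = B_q \sqcup a(B_q)$; then $\sum_q q 1_{B_q} =: f_1$ satisfies $f_1 + f_1 a = h$ on $U$. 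Near $\mathrm{Fix}_a$: cover $\mathrm{Fix}_a$ by finitely many disjoint $a$-invariant clopen sets $V_j$ on which $h$ is constant, equal to $2m_j$; setting $f_2 := \sum_j m_j 1_{V_j}$ gives $f_2 + f_2 a = \sum_j 2m_j 1_{V_j} = h$ on $\bigcup V_j \supset \mathrm{Fix}_a$. Combining $f_1$ and $f_2$ over the clopen decomposition $X = U' \sqcup (\text{rest})$ — more carefully, one should first fix disjoint $a$-invariant clopen $V_j$ covering $\mathrm{Fix}_a$ with $h|_{V_j} = 2m_j$, then handle $X \setminus \bigcup V_j$, which is $a$-invariant and disjoint from $\mathrm{Fix}_a$, by the level-set partition argument — yields $f$ with $\psi([f]) = h$.

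Next I would prove (b), i.e. $\ker(1-a) = \operatorname{Im}(1+a)$, which is equivalent to $H_{2k}(\Z_2, C(X,\Z)) = 0$ for $k \geq 1$. The inclusion $\operatorname{Im}(1+a) \subset \ker(1-a)$ is immediate. For the reverse, take $f$ with $fa = f$. I want $f = g + ga$ for some $g$. By Lemma \ref{impar} (the $2k+1$ case) the class of $f$ in $\ker(1-a)/\operatorname{Im}(1+a)$ is detected by $E([f]) = f|_{\mathrm{Fix}_a} \bmod 2 \in C(\mathrm{Fix}_a, \Z_2)$, but there is a subtlety: $E$ is an isomorphism onto $C(\mathrm{Fix}_a,\Z_2)$, so $[f] = 0$ iff $f|_{\mathrm{Fix}_a}$ is even, which need not hold for arbitrary $f \in \ker(1-a)$. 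So (b) as literally stated — $H_{2k}(\Z_2, C(X,\Z)) = 0$ — can only be correct because the relevant homology in even degree $2k$, $k\geq 1$, is computed from the resolution as $\ker(N)/\operatorname{Im}(D)$ rather than $\ker(D)/\operatorname{Im}(N)$; that is, I have the roles of $N = 1+a$ and $D = 1-a$ swapped. The correct statement from \cite[Theorem 6.2.2]{MR1269324} is: $H_{2k}(\Z_2, M) = \ker(N)/\operatorname{Im}(D)$ for $k \geq 1$ and $H_0 = M/\operatorname{Im}(D)$, while $H_{2k+1} = \ker(D)/\operatorname{Im}(N)$. Thus for $k \geq 1$,
$$H_{2k}(\Z_2, C(X,\Z)) = \frac{\ker(1+a)}{\operatorname{Im}(1-a)} = \frac{\{f : fa = -f\}}{\{g - ga : g \in C(X,\Z)\}},$$
and I must show this is $0$: if $fa = -f$ then $f$ vanishes on $\mathrm{Fix}_a$, so partitioning each level set $f^{-1}(q)$ ($q \neq 0$) as $A_q \sqcup a(A_q)$ gives $f = g - ga$ with $g = \sum_q q 1_{A_q}$, exactly the computation already performed for injectivity of $\psi$ in part (a). Hence $H_{2k}(\Z_2, C(X,\Z)) = 0$ for $k \geq 1$.

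The main obstacle is the bookkeeping in surjectivity of $\psi$: one must carefully isolate the behavior of $h$ on a clopen neighborhood of $\mathrm{Fix}_a$ (using local $a$-invariant clopen sets and the hypothesis that $h$ takes even values on $\mathrm{Fix}_a$, by continuity even values on a neighborhood) from the behavior on the complementary $a$-invariant clopen set away from the fixed points, where the free orbit partition trick applies. Everything else is a direct application of the periodic $\Z[\Z_2]$-resolution together with the clopen-set manipulations already used in Lemma \ref{impar}; once the correct identification of even-degree homology as $\ker(1+a)/\operatorname{Im}(1-a)$ is in place, the vanishing is the same argument that underlies the injectivity of $\psi$.
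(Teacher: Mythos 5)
Your overall strategy coincides with the paper's: identify $H_{2k}(\Z_2,C(X,\Z))$ for $k\geq 1$ with $\ker(1+a)/\Ima(1-a)$ via the periodic resolution (your mid-proof correction of the roles of $N$ and $D$ lands on exactly the formula \eqref{chato} the paper uses), observe that this quotient is precisely $\ker\psi$, and prove $\psi$ is bijective by clopen-set manipulations; your surjectivity argument, after your own refinement about replacing $X\setminus\mathrm{Fix}_a$ by the complement of an $a$-invariant clopen neighborhood, is the paper's argument. There is, however, one step that fails as written: in the injectivity of $\psi$ (and hence in the even-degree vanishing, which you reduce to it) you take $f$ with $fa=-f$ and assert that the final paragraph of Lemma \ref{impar} applies \emph{verbatim}, partitioning each level set $f^{-1}(q)$, $q\neq 0$, as $A_q\sqcup a(A_q)$. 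That partition requires the level set to be $a$-invariant, which holds in Lemma \ref{impar} because there $fa=f$; in the present situation $fa=-f$ gives $a(f^{-1}(q))=f^{-1}(-q)$, so for $q\neq 0$ the set $f^{-1}(q)$ is not $a$-invariant and admits no such partition. (Your formula $f=\sum_q q(1_{A_q}-1_{a(A_q)})$ is also inconsistent with $f^{-1}(q)=A_q\sqcup a(A_q)$, which would force a plus sign.)

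The gap is easily repaired, in either of two ways. One can argue as the paper does: choose an $a$-invariant clopen neighborhood $A$ of $\mathrm{Fix}_a$ on which $f$ vanishes, split $A^c=B\sqcup a(B)$, and set $g:=f1_B$, so that $g-ga=f(1_B+1_{a(B)})=f$. Alternatively, and more directly, the anti-invariance itself does all the work: put $g:=f\cdot 1_{\{f>0\}}$; then $ga=-f\cdot 1_{\{f<0\}}$, hence $g-ga=f\cdot 1_{\{f>0\}}+f\cdot 1_{\{f<0\}}=f$, showing $\ker(1+a)\subseteq\Ima(1-a)$ with no reference to $\mathrm{Fix}_a$ at all. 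With either fix, the rest of your argument goes through and agrees with the paper's proof.
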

\begin{proof}
Clearly, $\psi$ is a well-defined homomorphism, and we will show that it is bijective.

Given $f\in C(X,\Z)$, suppose $f+f a=0$. Then $f|_{\mathrm{Fix}_a}=0$. Take $A$ clopen neighborhood of $\mathrm{Fix}_a$ such that $a(A)=A$ and $f$ vanishes on $A$.

Then we can partition $A^c$ as $A^c=B\sqcup a(B)$ for some clopen set $B$. Let $g:=f1_B$. Then $f=g-g a$. Hence, $\psi$ is injective.

Let us now show surjectivity of $\psi$. Take $f\in C(X,\Z)$ such that $f a=f$ and $f(\mathrm{Fix}_a)\subset 2\Z$, and let us show that $f\in\Ima\psi$.

Let $U_1,\dots,U_n$ be $a$-invariant clopen subsets of $X$ whose union cover $\mathrm{Fix}_{a}$, and such that $f$ is constant on each $U_i$. By taking differences, we can assume that these sets are disjoint. By summing $f$ with functions of the form $2m_i1_{U_i}$, we may assume that $f|_{\mathrm{Fix}_a}=0$. We have $f=\sum_{q\in\Z\setminus\{0\}}q1_{f^{-1}(q)}$. Since the support of $f$ does not intersect $\mathrm{Fix}_a$, we can, for each $q\in\Z\setminus\{0\}$, partition $f^{-1}(q)$ as $f^{-1}(q)
=A_q\sqcup a(A_q)$, for some $A_q$ clopen. Let $g:=\sum_{q\in\Z\setminus\{0\}}qA_q$. Then $\psi([g])=f$.

Finally, notice that by \cite[Theorem 6.2.2]{MR1269324}, we have that, for $k\geq 1$, 
\begin{equation}\label{chato}
H_{2k}(\Z_2,C(X,\Z))=\frac{\{f\in C(X,\Z):f+fa=0\}}{\{f-fa:f\in C(X,\Z)\}}.
\end{equation}
Since the right-hand side of \eqref{chato} is equal to $\ker\psi$, the result follows.

\end{proof}

Given a finite index subgroup $\Lambda$ of a group $\G$, and $M$ a $\G$-module, there is a homomorphism $\mathrm{tr}\colon M_\G\to M_\Lambda$ given by $\mathrm{tr}([m])=[\sum g_im]$, where $\{g_1,\dots,g_{[\G:\Lambda]}\}$ is a complete set of right coset representatives (this is the so called \emph{transfer map}, and it does not depened on the choice of representatives).

\begin{theorem}\label{transfer}
Let $\alpha:=(\varphi,\sigma)$ be an action of $\Z\rtimes\Z_2$ on the Cantor set $X$ such that the restricted $\Z$-action is minimal. Let 

\begin{align*}
\mathrm{tr}\colon H_0(\Z\rtimes\Z_2,C(X,\Z))&\to (1+\sigma_*)H_0(\Z,C(X,\Z))\\
 [f]&\mapsto[f+f\sigma].
\end{align*}

If $\alpha$ is not free, then $\mathrm{tr}$ is an isomorphism. If $\alpha$ is free, then $\ker \mathrm{tr}\simeq \Z_2$ and it is generated by $[1_K]-[1_L]$ where $K$ and $L$ are clopen sets such that $X=K\sqcup\sigma(K)=L\sqcup\varphi\sigma(L)$.

\end{theorem}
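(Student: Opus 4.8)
The strategy is to analyze the map $\mathrm{tr}$ by unwinding the identification $H_0(\Z\rtimes\Z_2,C(X,\Z))=C(X,\Z)_{\Z\rtimes\Z_2}$ and comparing it with the known structure of $H_0(\Z,C(X,\Z))=C(X,\Z)_\Z$. The first step is to record that since the $\Z$-action induced by $\varphi$ is minimal, $C(X,\Z)_\Z$ is the well-studied ordered group $H_0(\varphi)$, and the involution $\sigma$ descends to an involution $\sigma_*$ on it because $\sigma\varphi\sigma=\varphi^{-1}$ conjugates the $\Z$-action to itself. The target $(1+\sigma_*)H_0(\Z,C(X,\Z))$ is the image of the transfer-type map for the index-$2$ inclusion $\Z\hookrightarrow\Z\rtimes\Z_2$, and indeed the general transfer map $\mathrm{tr}\colon M_{\Z\rtimes\Z_2}\to M_\Z$ has image contained in the $\sigma_*$-fixed part and, after composing with the quotient, lands in $(1+\sigma_*)H_0(\Z,C(X,\Z))$; I would first check that the formula $[f]\mapsto[f+f\sigma]$ is exactly this transfer map followed by the obvious projection, hence a well-defined homomorphism, and that it is surjective essentially by construction.

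The second step is to identify $\ker\mathrm{tr}$. An element $[f]\in C(X,\Z)_{\Z\rtimes\Z_2}$ lies in the kernel iff $f+f\sigma$ lies in the subgroup of $C(X,\Z)$ generated by $\{g-g\varphi:g\in C(X,\Z)\}$, i.e. $f+f\sigma=g-g\varphi$ for some $g$. I would use the analogue of the Pimsner–Voiculescu / coboundary description: for the minimal $\varphi$, $h$ is a $\varphi$-coboundary (i.e. $h=g-g\varphi$) iff all "cocycle sums" $\sum_{k=0}^{n-1}h\varphi^k$ are uniformly bounded, equivalently $\int h\,d\mu=0$ for every $\varphi$-invariant probability measure; since $\varphi$ is minimal these are the $\Z\rtimes\Z_2$-invariant measures pushed around by $\sigma$, so in effect $[f]\in\ker\mathrm{tr}$ iff $\int(f+f\sigma)\,d\mu=0$ for all such $\mu$, i.e. $\int f\,d\nu=0$ for every $\Z\rtimes\Z_2$-invariant probability measure $\nu$ (symmetrizing $\mu$). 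This reduces the computation of $\ker\mathrm{tr}$ to understanding which classes $[f]\in C(X,\Z)_{\Z\rtimes\Z_2}$ are "measure-zero", and this is where the dichotomy enters: one invokes Proposition \ref{fixed} and Lemma \ref{tecmain}. In the non-free case, the presence of a fixed point for $\sigma$ or $\varphi\sigma$ forces, via the castle decomposition of Lemma \ref{tecmain} (applied to a small clopen $\sigma$-invariant $Y$), that a measure-zero integer-valued function is already a $\Z\rtimes\Z_2$-coboundary — intuitively, the fixed point lets one "fold" a tower onto itself and cancel the middle, so there is no room for a leftover $\Z_2$. Hence $\mathrm{tr}$ is injective, giving the isomorphism.

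In the free case, Proposition \ref{fixed}(ii) (and the fact that in the free case one still has the structure of Lemma \ref{tecmain}) gives clopen sets $K,L$ with $X=K\sqcup\sigma(K)=L\sqcup\varphi\sigma(L)$; I would show $[1_K]-[1_L]$ lies in $\ker\mathrm{tr}$ by the direct computation $(1_K+1_K\sigma)-(1_L+1_L\varphi\sigma)=1_X-1_X=0$ in $C(X,\Z)$, together with noting $1_L\varphi\sigma$ and $1_L\sigma$ differ by a $\varphi$-coboundary (since $1_L\sigma-1_L\varphi\sigma=(1_L\sigma)-(1_L\sigma)\varphi$). This element is nonzero in $C(X,\Z)_{\Z\rtimes\Z_2}$ and $2$-torsion, so $\Z_2\hookrightarrow\ker\mathrm{tr}$; conversely, any $[f]\in\ker\mathrm{tr}$ has $f+f\sigma$ a $\varphi$-coboundary, and a bookkeeping argument on the castle — splitting each tower column $\varphi^k(Y_i)$, using $\sigma\varphi^{J_i}(Y_i)=Y_i$ from Lemma \ref{tecmain} to see how $\sigma$ permutes columns — shows $[f]$ is determined modulo $\Z\rtimes\Z_2$-coboundaries by a single parity, the obstruction to choosing the "fold" consistently across all towers, which is exactly $[1_K]-[1_L]$. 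I expect the \textbf{main obstacle} to be this last surjectivity-of-the-description step: making rigorous that in the free case the only invariant beyond the measure-zero condition is that one $\Z_2$, and organizing the castle combinatorics so that an arbitrary kernel element is reduced to $0$ or $[1_K]-[1_L]$ without drowning in case analysis — this is precisely the point where one should follow the Bratteli–Elliott–Herman–Skau and Thomsen arguments for the K-theory of the crossed product, transported into the homological setting.
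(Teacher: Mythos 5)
There is a genuine gap, and it sits exactly where you locate the crux of the argument.

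First, a concrete error: you reduce membership in $\ker\mathrm{tr}$ to the condition $\int(f+f\sigma)\,d\mu=0$ for all $\varphi$-invariant measures $\mu$, via the claimed equivalence ``$h=g-g\varphi$ for some $g\in C(X,\Z)$ iff $\int h\,d\mu=0$ for every invariant $\mu$.'' That equivalence is false for general Cantor minimal $\Z$-systems. Gottschalk--Hedlund characterizes coboundaries by \emph{boundedness} of the Birkhoff sums $\sum_{k=0}^{n-1}h\varphi^k$, which is strictly stronger than the vanishing of all integrals (the latter only gives $\frac1n\sum_{k=0}^{n-1}h\varphi^k\to 0$); equivalently, the infinitesimal subgroup of $C(X,\Z)_\Z$ need not be trivial. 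So the ``measure-zero'' description of the kernel on which the rest of your argument is built does not hold.

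Second, even granting a correct coboundary criterion, the decisive step --- that the kernel is exactly $0$ in the non-free case and exactly one copy of $\Z_2$ in the free case --- is deferred to a ``bookkeeping argument on the castle'' that is not carried out, and you acknowledge it as the main obstacle. The paper's proof does not touch the castle of Lemma \ref{tecmain} at all. Instead it exploits the presentation of $C(X,\Z)_{\Z\rtimes\Z_2}$ obtained from Lemmas \ref{se} and \ref{h2}, namely the isomorphism $\psi$ onto $(G_\sigma\oplus G_{\varphi\sigma})/\{(g+g\sigma,-g-g\varphi\sigma)\}$, together with one algebraic trick: if $f+f\sigma=h-h\varphi$, then applying $\sigma$ and rearranging shows $h+h\varphi\sigma$ is $\varphi$-invariant, hence equal to a constant $-z$; a second manipulation gives $f+f\sigma-h-h\sigma=z$. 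Consequently $\psi([f])=[(z,-z)]$, and the whole theorem reduces to the parity of $z$: evaluating the two identities at a fixed point of $\varphi\sigma$ or $\sigma$ forces $z$ even in the non-free case (so $\mathrm{tr}$ is injective), while in the free case $[(1,-1)]$ is a nonzero $2$-torsion class realized by $[1_K]-[1_L]$. Your verification that $[1_K]-[1_L]\in\ker\mathrm{tr}$ and your surjectivity/well-definedness remarks are fine, but the identification of the kernel needs to be replaced by an argument of this kind (or an actual execution of the tower combinatorics with the correct coboundary criterion).
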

\begin{proof}

Let $f\in \ker\mathrm{tr}$. In this case, there is $h\in C(X,\Z)$ such that 
\begin{equation}\label{equ}
f+f\sigma=h-h\varphi.
\end{equation}
This implies that $h-h\varphi=(h-h\varphi)\sigma=h\sigma-h\varphi\sigma$.

Therefore, $h+h\varphi\sigma=h\sigma+h\varphi$. Composing the right-hand side of this equation with $\varphi^{-1}$, we obtain that $h+h\varphi\sigma$ is $\varphi$-invariant. Since, $\varphi$ is minimal, we conclude that there is an integer $z$ such that 
\begin{align}
\label{aga}h+h\varphi\sigma&=-z
\end{align}

Furthermore, from \eqref{equ}, we get that $f+f\sigma-h=-h\varphi=-h\varphi-h\sigma+h\sigma=z+h\sigma.$ Therefore,

\begin{align}
\label{efe}f+f\sigma-h-h\sigma&=z.
\end{align} 

Let $G_\sigma$ and $G_{\varphi\sigma}$ be as in Lemma \ref{h2}. By Lemmas \ref{se} and \ref{h2}, there is an isomorphism $$\psi\colon C(X,\Z)_{\Z\rtimes\Z_2}\to\frac{G_\sigma\oplus G_{\varphi\sigma}}{\{(g+g\sigma,-g-g\varphi\sigma):g\in C(X,\Z)\}}$$ such that, for $f_1,f_2\in C(X,\Z)$, we have $\psi([f_1+f_2])=[(f_1+f_1\sigma,f_2+f_2\varphi\sigma)]$. 

From \eqref{aga} and \eqref{efe} we get that $\psi([f])=\psi([f+0])=[(f+f\sigma,0)]=[(z,-z)]$.

If $z$ is even, then clearly $[(z,-z)]=0$. Besides, if $\alpha$ is not free, then it follows from \eqref{aga}, \eqref{efe} and Proposition \ref{fixed} that $z$ is even. 

Now suppose $\alpha$ is free and take $K$ and $L$ clopen sets as in the statement. Clearly, $[1_K-1_L]\in\ker\mathrm{tr}$ and $\psi([1_K-1_L])=[(1,-1)]$. Finally, notice that $[(1,-1)]\neq 0$. Indeed, if there is $g\in C(X,\Z)$ such that $1=g+g\sigma=g+g\varphi\sigma$, then $g\sigma=g\varphi\sigma$, which implies that $g$ is $\varphi$-invariant, hence constant. But this contradicts the fact that $1=g+g\sigma$.

\end{proof}

The next result is essentially a summary of what we have obtained so far.

\begin{theorem}\label{comp}
Let $\alpha:=(\varphi,\sigma)$ be a minimal action of $\Z\rtimes\Z_2$ on the Cantor set $X$.
\begin{enumerate}
\item[(i)] If $\varphi$ is not minimal, then there exists $Y\subset X$ clopen and $\varphi$-invariant such that $X=Y\sqcup\sigma(Y)$ and $\varphi|_Y$ is minimal. Furthermore,
\begin{align*}
H_0(\Z\rtimes\Z_2,C(X,\Z))&\simeq H_0(\Z,C(Y,\Z)),\\
H_1(\Z\rtimes\Z_2,C(X,\Z))&\simeq\Z,\\
H_n(\Z\rtimes\Z_2,C(X,\Z))&=0,\text{ for $n\geq 2$};
\end{align*}
\item[(ii)] If $\varphi$ is minimal and $\alpha$ is free, then
\begin{align*}
H_0(\Z\rtimes\Z_2,C(X,\Z))&\simeq \Z_2\oplus (1+\sigma_*)H_0(\Z,C(X,\Z)),\\
H_n(\Z\rtimes\Z_2,C(X,\Z))&=0,\text{ for $n\geq 1$};
\end{align*}
\item[(iii)] If $\alpha$ is not free, then
\begin{align*}
H_0(\Z\rtimes\Z_2,C(X,\Z))&\simeq (1+\sigma_*)H_0(\Z,C(X,\Z)),\\
H_{2n-1}(\Z\rtimes\Z_2,C(X,\Z))&\simeq C(\mathrm{Fix}_\sigma\sqcup \mathrm{Fix}_{\varphi\sigma},\Z_2),\text{ for $n\geq 1$},\\
H_{2n}(\Z\rtimes\Z_2,C(X,\Z))&=0,\text{ for $n\geq 1$}.
\end{align*}

\end{enumerate}
\end{theorem}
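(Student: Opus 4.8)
The plan is to assemble Theorem \ref{comp} directly from the machinery developed in the preceding lemmas and theorems, treating the three cases separately and invoking the structural dichotomy of Proposition \ref{fixed}.

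For case (i), when $\varphi$ is not minimal, Proposition \ref{fixed}(ii) immediately gives the clopen $\varphi$-invariant set $Y$ with $X=Y\sqcup\sigma(Y)$, $\varphi|_Y$ minimal, and moreover $\alpha$ is free. I would then observe that as a $\Z\rtimes\Z_2$-module, $C(X,\Z)\simeq C(Y,\Z)\oplus C(\sigma(Y),\Z)\simeq \mathrm{CoInd}_{\Z}^{\Z\rtimes\Z_2}C(Y,\Z)$ (the coinduced, equivalently induced since $\Z$ has finite index, module). By Shapiro's lemma, $H_n(\Z\rtimes\Z_2,C(X,\Z))\simeq H_n(\Z,C(Y,\Z))$ for all $n$. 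Since $\Z$ has homological dimension $1$, this vanishes for $n\geq 2$; $H_0(\Z,C(Y,\Z))=C(Y,\Z)_{\Z}$ gives the stated $H_0$; and $H_1(\Z,C(Y,\Z))=\{f\in C(Y,\Z):f\varphi=f\}=\Z$ by minimality of $\varphi|_Y$, giving $H_1\simeq\Z$. Alternatively, one can present this via the induced-module identification and Mayer–Vietoris-type bookkeeping, but Shapiro is cleanest. I would double-check that the $\Z\rtimes\Z_2$-module structure on $C(X,\Z)$ really is the (co)induced one from the $\Z$-module $C(Y,\Z)$: since $\sigma$ swaps $Y$ and $\sigma(Y)$ and conjugates $\varphi$ to $\varphi^{-1}$, this is exactly the description of $\mathrm{Ind}$.

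For cases (ii) and (iii), $\varphi$ is minimal, so Lemmas \ref{se}, \ref{impar}, \ref{h2} and Theorem \ref{transfer} all apply with $a=\sigma$, $b=\varphi\sigma$ (noting $ab=\sigma\varphi\sigma\cdot$—wait, $ab = \sigma\cdot\varphi\sigma$; one checks the $\Z$-action induced by this coincides up to inversion with that of $\varphi$, hence is minimal). For $n\geq 2$: by Theorem \ref{freep}, $H_n(\Z\rtimes\Z_2,C(X,\Z))\simeq H_n(\Z_2,C(X,\Z)_\sigma)\oplus H_n(\Z_2,C(X,\Z)_{\varphi\sigma})$, which by Lemmas \ref{impar} and \ref{h2} is $0$ for $n$ even and $C(\mathrm{Fix}_\sigma,\Z_2)\oplus C(\mathrm{Fix}_{\varphi\sigma},\Z_2)\simeq C(\mathrm{Fix}_\sigma\sqcup\mathrm{Fix}_{\varphi\sigma},\Z_2)$ for $n$ odd. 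For $n=1$: Lemma \ref{se} says $(\mathrm{cor},\mathrm{cor})$ is an isomorphism onto $H_1(\Z_2*\Z_2,C(X,\Z))$, so the same computation as the $n$ odd case applies, giving $C(\mathrm{Fix}_\sigma\sqcup\mathrm{Fix}_{\varphi\sigma},\Z_2)$. When $\alpha$ is free, $\mathrm{Fix}_\sigma=\mathrm{Fix}_{\varphi\sigma}=\emptyset$ (here I'd use Proposition \ref{fixed}(i): for a free minimal action both $\sigma$ and $\varphi\sigma$ are fixed-point free — actually freeness directly gives this), so all positive-degree homology vanishes, establishing the $n\geq 1$ part of (ii) and the odd-degree part of (iii) (the latter being nonzero precisely when a fixed point exists).

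It remains to pin down $H_0$ in (ii) and (iii), and this is where Theorem \ref{transfer} does the work. The transfer map $\mathrm{tr}\colon H_0(\Z\rtimes\Z_2,C(X,\Z))\to (1+\sigma_*)H_0(\Z,C(X,\Z))$ is surjective essentially by construction (its image is the stated group; one checks $\mathrm{tr}$ lands there and is onto because every element $[f+f\sigma]$ of the target is hit by $[f]$). Theorem \ref{transfer} then identifies the kernel: trivial if $\alpha$ is not free, giving $H_0\simeq(1+\sigma_*)H_0(\Z,C(X,\Z))$ as in (iii); and isomorphic to $\Z_2$ if $\alpha$ is free, generated by $[1_K]-[1_L]$, so the short exact sequence $0\to\Z_2\to H_0\to(1+\sigma_*)H_0(\Z,C(X,\Z))\to 0$ holds — and since $(1+\sigma_*)H_0(\Z,C(X,\Z))$ is a subgroup of $H_0(\Z,C(X,\Z))=C(X,\Z)_\varphi$, which is torsion-free (it is an ordered group with order unit, a quotient that embeds in $\R$-valued functions; more directly it is the $K_0$ of the minimal $\Z$-system, hence torsion-free), the extension splits, giving $H_0\simeq\Z_2\oplus(1+\sigma_*)H_0(\Z,C(X,\Z))$ as in (ii). The main obstacle is the splitting argument in case (ii): I need torsion-freeness of $(1+\sigma_*)H_0(\Z,C(X,\Z))$, which I would justify by recalling that $H_0(\Z,C(X,\Z))\simeq C(X,\Z)/(1-\varphi)C(X,\Z)$ is the dimension group of a Cantor minimal system and hence torsion-free (it embeds in the continuous affine functions on its trace space), so any subgroup is torsion-free and a $\Z_2$-extension of it must split. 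Everything else is a matter of correctly matching up the abstract homology computations with the concrete descriptions and carefully handling the free versus non-free dichotomy via Proposition \ref{fixed}.
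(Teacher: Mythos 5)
Your proof is correct and follows essentially the same route as the paper: Shapiro's lemma for case (i), Theorem \ref{freep} together with Lemmas \ref{se}, \ref{impar} and \ref{h2} for the higher homology in (ii)--(iii) (noting via Proposition \ref{fixed} that $\varphi$ is minimal whenever $\alpha$ is not free), and the transfer map of Theorem \ref{transfer} for $H_0$. The only cosmetic difference is how the extension $0\to\Z_2\to H_0\to(1+\sigma_*)H_0(\Z,C(X,\Z))\to 0$ is split in case (ii): you invoke the vanishing of $\mathrm{Ext}^1(Q,\Z_2)$ for torsion-free $Q$, whereas the paper observes that $\ker\mathrm{tr}$ is a pure bounded subgroup and hence a direct summand --- both are standard facts and equally valid.
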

\begin{proof}

(i) The existence of $Y$ is the content of Proposition \ref{fixed}. Notice that 
$$C(X,\Z)\simeq\mathrm{Ind}_\Z^{\Z\rtimes\Z_2}C(Y,\Z).$$ 
Shapiro's Lemma \cite[Proposition 6.2]{MR1324339} implies then that $$H_*(\Z\rtimes\Z_2,C(X,\Z))\simeq H_*(\Z,C(Y,\Z)).$$ Finally, the homology groups of a Cantor minimal $\Z$-system are easy to compute.

(ii) Let us begin by showing that 
\begin{equation}\label{seila}
H_0(\Z\rtimes\Z_2,C(X,\Z))\simeq \Z_2\oplus (1+\sigma_*)H_0(\Z,C(X,\Z)).
\end{equation}

Consider the map $\mathrm{tr}\colon H_0(\Z\rtimes\Z_2,C(X,\Z))\to (1+\sigma_*)H_0(\Z,C(X,\Z))$
from Theorem \ref{transfer}. Since $H_0(\Z,C(X,\Z))$ is torsion-free, it follows that its subgroup $(1+\sigma_*)H_0(\Z,C(X,\Z))$ is torsion free as well. Since $\mathrm{tr}$ is surjective, we conclude from \cite[Section 23 (H)]{zbMATH03162956} that $\ker\mathrm{tr}$ is a pure subgroup of $ H_0(\Z\rtimes\Z_2,C(X,\Z))$. By Theorem \ref{transfer}, we have that $\ker\mathrm{tr}\simeq\Z_2$. Therefore, \cite[Theorem 24.1]{zbMATH03162956} implies that $\ker\mathrm{tr}$ is a direct summand of $H_0(\Z\rtimes\Z_2,C(X,\Z))$. This conludes the proof of \eqref{seila}.

The remaining computations of the homology groups in cases (ii) and (iii) are a consequence of Theorem \ref{freep} and Lemmas \ref{se}, \ref{impar} and \ref{h2}. In case (iii), since the $\Z$-action induced by $\varphi$ is free, notice that $\mathrm{Fix}_\sigma$ is disjoint from $\mathrm{Fix}_{\varphi\sigma}$.

\end{proof}

Let us now apply Theorem \ref{comp} to an example which had its K-theory computed in \cite[Corollary 4.4]{MR1245825}.

\begin{example}
Fix $\theta\in(0,1)$ an irrational number. Let $\tilde{X}$ be the set obtained from $\R$ by replacing each $t\in\Z+\theta\Z$ by two elements $\{t^-, t^+\}$, and endow $\tilde{X}$ with the order topology. Notice that there is an action $\Z\stackrel{\alpha}{\curvearrowright}\tilde{X}$ by translations ($\alpha_n(x)=n+x$). We let $X:=\frac{\tilde{X}}{\Z}$. Then $X$ is homeomorphic to the Cantor set and there is a minimal homeomorphism $R_\theta\colon X\to X$ given by $R_{\theta}(x)=x+\theta$.
 
Furthermore, there is an involutive homeomorphism $\sigma$ on $X$ given by $\sigma(x)=-x$ and $\sigma(t^{\pm})=t^{\mp}$. Notice that the only fixed point of $\sigma$ is $\frac{1}{2}$, and the only fixed points of $R_\theta\circ\sigma$ are $\frac{1+\theta}{2}$ and $\frac{\theta}{2}$. 

It was shown in \cite[Theorem 2.1]{MR978619} that $H_0(\Z,C(X,\Z))\simeq \Z^2$, and the generators are $[1_{[0^+,\theta^+)]}]$ and $[1_{[\theta^+,1^+)}]$. Observe that $\sigma_*$ acts trivially on these two elements.

From these observations and Theorem \ref{comp}, it follows that, for $k\geq 1$,
\begin{align*}
H_0(\Z\rtimes\Z_2,C(X,\Z))&\simeq \Z^2\\
H_{2k-1}(\Z\rtimes\Z_2,C(X,\Z))&\simeq\Z_2\times\Z_2\times\Z_2\\
H_{2k}(\Z\rtimes\Z_2,C(X,\Z))&=0.
\end{align*}

\end{example}

\begin{remark}
In \cite{MR3552533}, Matui conjectured that if $G$ is an ample, effective, second countable, minimal groupoid with unit space homemorphic to the Cantor set, then $K_*(C^*_r(G))\simeq\bigoplus_{n=0}^\infty H_{2n+*}(G)$ for $*=0,1$ (\emph{HK conjecture}). In \cite{scarparo2019homology}, the (transformation groupoids of the) $\Z\rtimes\Z_2$-odometers from Example \ref{ce} were shown to be counterexamples to the HK conjecture.

Let $\alpha$ be a minimal action of $\Z\rtimes\Z_2$ on the Cantor set $X$. If $\alpha$ is not free, then the associated crossed product $C(X)\rtimes\Z\rtimes\Z_2$ is AF \cite[Theorem 3.5]{MR1245825}, hence $K_1(C(X)\rtimes\Z\rtimes\Z_2)=0$. On the other hand, it follows from Proposition \ref{fixed} and Theorem \ref{comp} that $H_{2n+1}(\Z\rtimes\Z_2,C(X,\Z))\neq 0$. Therefore, $\alpha$ is a counterexample to the HK conjecture.

If $\alpha$ is free, it follows from \cite[Theorems 4.30 and 4.42]{MR2586354} and Theorem \ref{comp} that $\alpha$ satisfies the HK conjecture. Alternatively, this also follows from Theorem \ref{comp} and \cite[Remark 4.7]{proietti2020homology}.

\end{remark}
\begin{remark}

Given an ample groupoid $G$ with compact unit space, the \emph{topological full group} of $G$, denoted by $[[G]]$ is the the group of compact-open bisections $U$ such that $r(U)=s(U)=G^{(0)}$.

Given an effective, minimal, second countable groupoid $G$ with compact unit space homeomorphic to the Cantor set, Matui conjectured in \cite{MR3552533} that the index map $I\colon[[G]]_\mathrm{ab}\to H_1(G)$ is surjective and that its kernel is a quotient of $H_0(G)\otimes\Z_2$ under a certain canonical map (\emph{AH conjecture}). Assuming that $G$ is also minimal and almost finite, Matui proved that the index map is surjective and if $G$ is also principal, then it satisfies AH conjecture.

We have not been able to verify whether the AH conjecture holds for non-free Cantor minimal $\Z\rtimes\Z_2$-systems in general, but note that in \cite{scarparo2019homology} it was shown that the $\Z\rtimes\Z_2$-odometers from Example \ref{ce} satisfy it.

\end{remark}
\begin{ack}
The authors would like to thank the referee for a careful reading of the paper and the helpful comments.

\end{ack}
\bibliographystyle{acm}
\bibliography{bibliografia}
\end{document}